\DeclareMathAlphabet{\mathpzc}{OT1}{pzc}{m}{it}
\def\cP{\mathscr{P}}
\def\cQ{\mathscr{Q}}
\def\cR{\mathscr{R}}
\def\cS{\mathscr{S}}
\def\BC{\mathbb{C}}
\def\BR{\mathbb{R}}
\def\BZ{\mathbb{Z}}
\def\sA{\mathsf{A}}
\def\sB{\mathsf{B}}
\def\sC{\mathsf{C}}
\def\sD{\mathsf{D}}
\def\sT{\mathsf{T}}
\def\sU{\mathsf{U}}
\def\sV{\mathsf{V}}
\def\add{\operatorname{add}}
\def\adots{\mathinner{\mkern1mu\raise1.0pt\vbox{\kern7.0pt\hbox{.}}\mkern2mu\raise4.0pt\hbox{.}\mkern2mu\raise7.0pt\hbox{.}\mkern1mu}}
\def\ast{{\textstyle *}}
\def\Aut{\operatorname{Aut}}
\def\Costab{\operatorname{Costab}}
\def\Coslice{\operatorname{Coslice}}
\def\D{\sD}
\def\Dc{\D^{\operatorname{c}}}
\def\GL{\operatorname{GL}}
\def\Hom{\operatorname{Hom}}
\def\id{\operatorname{id}}
\def\ind{\operatorname{ind}}
\def\inf{\operatorname{inf}}
\def\K{\operatorname{K}}
\def\rank{\operatorname{rank}}
\def\split{\operatorname{split}}
\newtheorem{Lemma}{Lemma}[section]
\newtheorem{Theorem}[Lemma]{Theorem}
\newtheorem{Proposition}[Lemma]{Proposition}
\theoremstyle{definition}
\newtheorem{Definition}[Lemma]{Definition}
\newtheorem{Setup}[Lemma]{Setup}
\newtheorem{Remark}[Lemma]{Remark}
\begin{document}

\setlength{\parindent}{0pt}
\setlength{\parskip}{7pt}
%The default \baselineskip is close to 4.8mm
%\setlength{\baselineskip}{5.3mm}

\title[The co-stability manifold of a triangulated category]{The co-stability manifold of a triangulated category}

\author{Peter J\o rgensen}
\address{School of Mathematics and Statistics,
Newcastle University, Newcastle upon Tyne NE1 7RU, United Kingdom}
\email{peter.jorgensen@ncl.ac.uk}
\urladdr{http://www.staff.ncl.ac.uk/peter.jorgensen}

\author{David Pauksztello}
\address{Institut f\"{u}r Algebra, Zahlentheorie und Diskrete
  Mathematik, Fa\-kul\-t\"{a}t f\"{u}r Ma\-the\-ma\-tik und Physik, Leibniz
  Universit\"{a}t Hannover, Welfengarten 1, 30167 Hannover, Germany}
\email{pauk@math.uni-hannover.de}
\urladdr{http://www.iazd.uni-hannover.de/\~{ }pauksztello}

%\thanks{Date: \today. A thank you would go here}

\keywords{Co-slicing, co-stability condition,
co-t-structure, split Harder-Narasimhan property}

\subjclass[2010]{18E30}
%13D25: Complexes
%16E10: Homological dimension
%16E45: Differential graded algebras and applications
%16G10: Representations of Artinian rings 
%16G60: Representation type (finite, tame, wild, etc.) 
%16G70: Auslander-Reiten sequences (almost split sequences) and
%       Auslander-Reiten quivers
%18E30: Derived categories, triangulated categories
%18E35: Localization of categories
%18G05: Projectives and injectives
%18G35: Chain complexes
%18G99: Homological algebra: None of the above, but in this section 
%55P62: Rational homotopy theory

\begin{abstract} 

Stability conditions on triangulated categories were introduced by
Brid\-ge\-land as a `continuous' generalisation of t-structures.  The
set of locally-finite stability conditions on a triangulated category
is a manifold which has been studied intensively.

However, there are mainstream triangulated categories whose stability
manifold is the empty set.  One example is $\Dc \big( k[X] / ( X^2 )
\big)$, the compact derived category of the dual numbers over an
algebraically closed field $k$.

This is one of the motivations in this paper for introducing
co-stability conditions as a `continuous' generalisation of
co-t-structures.  Our main result is that the set of nice co-stability
conditions on a triangulated category is a manifold.  In particular,
we show that the co-stability manifold of $\Dc \big( k[X] / ( X^2 )
\big)$ is $\BC$.

\end{abstract}

\maketitle

\setcounter{section}{0}
\section{Introduction}
\label{sec:introduction}

Triangulated categories are useful in several branches of mathematics,
and stability conditions are an important tool for their study
introduced by Bridgeland in \cite{B}.  Stability conditions are
`continuous' generalisations of bounded t-structures and the main
result of \cite{B} is that on a triangulated category, the set of
stability conditions which satisfy the technical condition of
local-finiteness is a manifold.  This `stability manifold' is divided
into subsets corresponding to bounded t-structures in the category.

However, there are mainstream triangulated categories for which the
stability manifold is the empty set.  An example is $\Dc \big( k[X] /
( X^2 ) \big)$, the compact derived category of the dual numbers over
an algebraically closed field $k$.  This is our first motivation for
introducing the `mirror' notion of co-stability conditions and proving
the following main theorem.

{\bf Theorem A. }
{\em
Let $\sT$ be a triangulated category satisfying the conditions in
Setup \ref{set:blanket} below.  Then the set of co-stability conditions on
$\sT$ which satisfy the technical condition in Definition
\ref{def:40} is a topological manifold. 
}

Indeed, the `co-stability manifold' of the category $\Dc \big( k[X] /
( X^2 ) \big)$ which exists by Theorem A is non-trivial:

{\bf Theorem B. }
{\em
Let $k$ be an algebraically closed field and consider $\Dc \big( k[X]
/ ( X^2 ) \big)$.  Its stability manifold is the empty set and its
co-stability manifold is $\BC$.  }

The co-stability manifold of a triangulated category is divided into
subsets cor\-re\-spon\-ding to bounded co-t-structures in the
category; see Remark \ref{rmk:chambers}.  Recall that co-t-structures
are, in a sense, a mirror image of t-structures.  They were introduced
independently in \cite[def.\ 1.1.1]{Bo2} and \cite[def.\ 2.4]{P}, see
Definition \ref{def:co-t-structure}, and have recently been the focus
of considerable interest, see \cite{AI}, \cite{Bo1}, \cite{Bo2},
\cite{Bo3}, \cite{HJY}, \cite{MSSS}, \cite{P}, \cite{S}, \cite{W}.
Alternatively, the co-stability manifold can be viewed as being divided into
subsets corresponding to silting subcategories as defined in
\cite[def.\ 2.1]{AI}, because these are in bijection with bounded
co-t-structures by \cite[cor.\ 4.7]{MSSS}.  These observations are our
second motivation for introducing co-stability conditions.

{\em Relation to Bridgeland's paper \cite{B}. }
Recall that a stability condition is a pair $( Z , \cP )$ where $Z :
\K_0( \sT ) \rightarrow \BC$ is a homomorphism, $\cP$ a so-called
slicing consisting of certain subcategories $\cP( \varphi )$ for
$\varphi \in \BR$.  It is required that $Z( p ) = m( p )\exp(
i\pi\varphi )$ with $m( p ) > 0$ for $p \in \cP( \varphi ) \setminus
0$.

We define co-stability conditions analogously, replacing the slicing
$\cP$ with a co-slicing $\cQ$; this notion is defined in Section
\ref{sec:co-slicings}.  Some other parts of what we do are also
closely inspired by \cite{B} as we shall point out along the way.

However, the passage from stability conditions to co-stability
conditions is non-trivial.  It is governed by a `looking glass
principle' (a term coined in \cite{AH}): Some results on stability
conditions have mirror versions for co-stability conditions, but
others do not and translation is rarely mechanical.  In fact, this is
already true of the passage from t-structures to co-t-structures.
This means that our proofs are different from those in \cite{B}.

{\em Further remarks and setup. }
We have chosen only to define the co-stability manifold for
triangulated categories with finitely generated $\K_0$-group.  This
covers the examples we have in mind from representation theory,
ensures that the co-stability manifold is finite dimensional, and
makes the theory less technical.

The paper is organised as follows: Section \ref{sec:co-t-structures}
recapitulates the definition of co-t-structures.  Section
\ref{sec:co-slicings} defines co-slicings in triangulated categories.
Section \ref{sec:metric} turns the set of co-slicings into a metric
space.  Section \ref{sec:co-stability_functions} defines co-stability
functions and the split Harder-Narasimhan property.  Section
\ref{sec:co-stability_conditions} defines co-stability conditions and
proves a crucial separation result in Proposition \ref{pro:27}.
Section \ref{sec:triangles} has two technical lemmas.  Section
\ref{sec:manifold} proves an equally crucial deformation result in
Proposition \ref{pro:deformation}; Theorem A is a consequence which
appears as Theorem \ref{thm:manifold}.  Section
\ref{sec:group_actions} remarks that, like the stability manifold, the
co-stability manifold admits commuting group actions of $\Aut( \sT )$
and $\widetilde{ \GL }^+( 2 , \BR )$.  Section \ref{sec:example1}
proves Theorem B which is a special case of Theorem
\ref{thm:example1}.  Section \ref{sec:example2} gives an example explaining
why the technical condition in Definition \ref{def:40} is necessary
for Proposition \ref{pro:deformation} and hence for Theorem A.

\begin{Setup}
\label{set:blanket}
Throughout, $\sT$ is an essentially small triangulated category which
is Krull-Schmidt and has finitely generated $\K_0(\sT)$.
\end{Setup}

When we say that $\sT$ is Krull-Schmidt, we mean that it has split
idempotents, that each object of $\sT$ is the direct sum of finitely
many indecomposable objects, and that each indecomposable object has
local endomorphism ring.  The Krull-Schmidt theorem then implies that
the indecomposable direct summands of a given object are determined up
to isomorphism.

We always assume that subcategories are closed under isomorphisms;
that is, if $a$ is an object of a subcategory and $a \cong a'$ in the
ambient category, then $a'$ is also in the sub\-ca\-te\-go\-ry.  Each
of our categorical closure operations is understood as producing full
subcategories.  In particular, $( \;\; )^-$ denotes closure under
extensions, $( \;\; )^+$ denotes closure under extensions and direct
summands, and $\add$ denotes closure under finite direct sums and direct
summands.  The symbol $\perp$ sends full subcategories of $\sT$ to
full subcategories as follows.
\[
  \sA^{\perp} = \{\, t \in \sT \,|\, \sT( \sA , t ) = 0 \,\}, \;\;\;
  {}^{\perp}\sB = \{\, t \in \sT \,|\, \sT( t , \sB ) = 0 \,\}.
\]
The prefix $\ind$ denotes the class of indecomposable objects in an
additive category.

We use $\sT( - , - )$ as shorthand for $\Hom_{ \sT }( - , - )$ and
denote the suspension functor of $\sT$ by $\Sigma$.  Distinguished
triangles are sometimes written in the form $\xymatrix @!  { t' \ar[r]
  & t \ar[r] & t'' \ar@{~>}[r] & t' }$; the wiggly arrow is short for
a morphism $t'' \rightarrow \Sigma t'$.  The ordinary Grothendieck
group is denoted by $\K_0$ and the split Grothendieck group by
$\K_0^{\split}$.

\section{Co-t-structures}
\label{sec:co-t-structures}

This section recalls the definition of co-t-structures and two useful
properties.  The definition is due independently to \cite[def.\
1.1.1]{Bo2} and \cite[def.\ 2.4]{P}; we have tweaked it slightly for
reasons of symmetry.

\begin{Definition}
\label{def:co-t-structure}
A co-t-structure in $\sT$ is a pair $( \sA , \sB )$ of full
subcategories closed under direct sums and summands satisfying the
following conditions.
\begin{enumerate}
\setcounter{enumi}{0}

  \item  $\Sigma^{-1}\sA \subseteq \sA$ and $\Sigma\sB \subseteq \sB$.

\smallskip

  \item  $\sT( \sA , \sB ) = 0$.

\smallskip

  \item  For each object $t \in \sT$ there is a distinguished triangle
  $a \rightarrow t \rightarrow b$ with $a \in \sA$, $b \in \sB$. 

\end{enumerate}
The co-heart is $\sC = \sA \cap \Sigma^{-1}\sB$.

The co-t-structure is called bounded if
\[
  \bigcup_{j \in \BZ} \Sigma^j \sA = \bigcup_{j \in \BZ} \Sigma^j \sB = \sT.
\]
\end{Definition}

\begin{Remark}
Note that if we replace (i) by the conditions that $\Sigma \sA
\subseteq \sA$ and $\Sigma^{-1}\sB \subseteq \sB$, then we get the
definition of a t-structure.
\end{Remark}

The following two propositions were proved in \cite[prop.\ 1.5.6 and
thm.\ 5.3.1]{Bo2}.  We restate them for the convenience of the
reader.  Note that Proposition \ref{pro:filtration} is the
co-t-structure analogue of \cite[lem.\ 3.2]{B}.

\begin{Proposition}
\label{pro:filtration}
Let $( \sA , \sB )$ be a bounded co-t-structure in $\sT$ with co-heart
$\sC = \sA \cap \Sigma^{-1}\sB$.  For each object $t \neq 0$ of $\sT$,
there is a diagram
\[
  \xymatrix @-2.5pc @! {
    0 \cong t_0 \ar[rr] & & t_1 \ar[rr] \ar[dl] & & t_2 \ar[rr] \ar[dl] & & \cdots \ar[rr] & & t_{n-1} \ar[rr] & & t_n \cong t \ar[dl] \\
    & \Sigma^{j_1}c_1 \ar@{~>}[ul] & & \Sigma^{j_2}c_2 \ar@{~>}[ul] & & & & & & \Sigma^{j_n}c_n \ar@{~>}[ul] & \\
               }
\]
consisting of distinguished triangles, where $c_m \in \sC$ for each
$m$ and $j_1 < j_2 < \cdots < j_n$.
\end{Proposition}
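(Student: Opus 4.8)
The plan is to realise the diagram as an iterated ``co-heart truncation'' of $t$, assembled from the top downwards by induction. For the given $t \neq 0$ I would first fix integers $P$ and $Q$ with $t \in \Sigma^P \sB$ and $t \in \Sigma^Q \sA$; these exist because the co-t-structure is bounded, and necessarily $P < Q$, since $P \geq Q$ would force $t \in \Sigma^Q\sA \cap \Sigma^Q\sB$ (using $\Sigma\sB \subseteq \sB$), hence $\sT(\Sigma^{-Q}t,\Sigma^{-Q}t) = 0$ by condition (ii) of Definition \ref{def:co-t-structure}, hence $t = 0$. I would then prove, by induction on $N = Q - P \geq 1$, the statement: for all integers $P < Q$ with $Q - P = N$ and every object $s \in \Sigma^Q\sA \cap \Sigma^P\sB$ there is a diagram as in the proposition with $0 \neq c_m \in \sC$ and $P < j_1 < \cdots < j_n \leq Q$; for $s = 0$ this is the empty diagram, so we may assume $s \neq 0$.

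The base case $N = 1$ is immediate: $s \in \Sigma^{P+1}\sA \cap \Sigma^P\sB = \Sigma^{P+1}(\sA \cap \Sigma^{-1}\sB) = \Sigma^{P+1}\sC$, so $s \cong \Sigma^{P+1}c$ with $0 \neq c \in \sC$. For the step $N \geq 2$, apply condition (iii) of Definition \ref{def:co-t-structure}, for the shifted co-t-structure $(\Sigma^{Q-1}\sA, \Sigma^{Q-1}\sB)$, to $s$: this yields a triangle $a \to s \to b$ with $a \in \Sigma^{Q-1}\sA$ and $b \in \Sigma^{Q-1}\sB$. If $b = 0$ then $s \cong a \in \Sigma^{Q-1}\sA \cap \Sigma^P\sB$ and we conclude by the induction hypothesis for $(P, Q-1)$, so assume $b \neq 0$. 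The crucial point is that in fact $b \in \Sigma^Q\sC$: rotating to $s \to b \to \Sigma a$ exhibits $b$ as an extension of $\Sigma a \in \Sigma^Q\sA$ by $s \in \Sigma^Q\sA$, so, the classes of a co-t-structure being closed under extensions, $b \in \Sigma^Q\sA$; together with $b \in \Sigma^{Q-1}\sB$ and $\Sigma^Q\sA \cap \Sigma^{Q-1}\sB = \Sigma^Q\sC$ this gives $b = \Sigma^Q c_n$ with $0 \neq c_n \in \sC$.

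It remains to feed $a$ into the induction. Rotating the same triangle the other way, $\Sigma^{Q-1}c_n \to a \to s$ exhibits $a$ as an extension of $s \in \Sigma^P\sB$ by $\Sigma^{Q-1}c_n \in \Sigma^{Q-1}\sC \subseteq \Sigma^{Q-2}\sB \subseteq \Sigma^P\sB$ (this last inclusion, i.e.\ $Q - 2 \geq P$, is where $N \geq 2$ enters), so by extension-closure $a \in \Sigma^{Q-1}\sA \cap \Sigma^P\sB$; as $P < Q - 1$ this object is covered by the induction hypothesis for $(P, Q-1)$. It thus carries a diagram $0 \cong a_0 \to \cdots \to a_{n-1} \cong a$ with cones $\Sigma^{j_m}c_m$, $0 \neq c_m \in \sC$, $P < j_1 < \cdots < j_{n-1} \leq Q - 1$. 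Splicing the triangle $a \to s \to \Sigma^Q c_n$ on top --- set $t_m := a_m$ for $m < n$, $t_n := s$, $j_n := Q$ --- produces the diagram required for $s$, the inequality $j_{n-1} \leq Q - 1 < Q = j_n$ being exactly what keeps the indices strictly increasing. Applying this with $s = t$ and the $P, Q$ fixed at the outset gives the proposition, with $n \geq 1$ since $t \neq 0$.

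I expect the genuine obstacle to be the identification $b \in \Sigma^Q\sC$. Because co-t-structure truncation triangles are not functorial, one cannot read off the graded pieces of $t$ from cohomology functors as in the t-structure case; the extreme piece has to be produced by hand and its ``weight'' pinned down, and this rests on the closure of $\sA$ and $\sB$ under extensions together with careful shift bookkeeping. If that closure property has not already been recorded, deriving it from the axioms --- truncate the would-be extension, use orthogonality (ii) to kill the relevant morphism, and note that the triangle then splits, realising the object as a summand of a member of the class --- is the one self-contained lemma the argument needs; the rest is routine rotation of triangles and induction.
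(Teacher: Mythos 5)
Your proof is correct. Note that the paper does not prove this proposition at all: it is quoted from Bondarko \cite[prop.\ 1.5.6]{Bo2}, and your induction on the width $Q-P$ of the ``weight range'' $\Sigma^Q\sA \cap \Sigma^P\sB$, with the top piece identified as $\Sigma^Q\sC$ via extension-closure, is essentially Bondarko's argument. The extension-closure of $\sA$ and $\sB$ that you flag as the one missing lemma does indeed follow from closure under direct summands via $\sA = {}^{\perp}\sB$ and $\sB = \sA^{\perp}$, exactly as you sketch, and this identity is in any case invoked by the paper itself in the proof of Lemma \ref{lem:8}.
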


\begin{Proposition}
\label{pro:K}
Let $( \sA , \sB )$ be a bounded co-t-structure in $\sT$ with co-heart
$\sC$.  There is an isomorphism
\[
  \K_0^{\split}( \sC ) \stackrel{\sim}{\rightarrow} \K_0( \sT )
\]
given by $[c] \mapsto [c]$.

The inverse is $[t] \mapsto \sum_m [\Sigma^{j_m}c_m]$ where the
objects $\Sigma^{j_m}c_m$ come from a diagram as in Proposition
\ref{pro:filtration}; this sum determines a well-defined element
of $\K_0^{ \split }( \sC )$.
\end{Proposition}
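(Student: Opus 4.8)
The plan is to establish the isomorphism $\K_0^{\split}(\sC) \xrightarrow{\sim} \K_0(\sT)$ by exhibiting the two candidate maps and checking they are mutually inverse group homomorphisms. First I would define $\Phi : \K_0^{\split}(\sC) \to \K_0(\sT)$ on generators by $[c] \mapsto [c]$. Since $\K_0^{\split}(\sC)$ is the free abelian group on isomorphism classes of objects of $\sC$ modulo the relations $[c \oplus c'] = [c] + [c']$, and since such direct sums remain direct sums in $\sT$ (where they also equal the sum in $\K_0(\sT)$), the map $\Phi$ is well defined. Conversely, for $t \neq 0$ in $\sT$ apply Proposition \ref{pro:filtration} to obtain a tower with triangles $t_{m-1} \to t_m \to \Sigma^{j_m}c_m$, and set $\psi(t) = \sum_m [\Sigma^{j_m}c_m] \in \K_0^{\split}(\sC)$, together with $\psi(0) = 0$.

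The first real task is to show $\psi$ is independent of the chosen filtration, so that it descends to a homomorphism $\Psi : \K_0(\sT) \to \K_0^{\split}(\sC)$. The natural approach is: given two such diagrams for the same $t$, one shows the multisets $\{(j_m, c_m)\}$ agree up to reordering and isomorphism, or at least that the associated sums in $\K_0^{\split}(\sC)$ coincide. This should follow from a Jordan--H\"older-type uniqueness argument for the co-t-structure filtration --- the orthogonality $\sT(\sA, \sB) = 0$ together with the closure conditions $\Sigma^{-1}\sA \subseteq \sA$, $\Sigma\sB \subseteq \sB$ forces the ``co-slices'' $\Sigma^{j_m}c_m$ to be determined. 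Alternatively, and perhaps more cleanly, one argues directly that $\sum_m [\Sigma^{j_m} c_m]$ maps under $\Phi$ to $[t]$ (by induction up the tower, using additivity of $[-]$ on triangles in $\K_0(\sT)$), and that the sum is forced to be unique because $\Phi$ will turn out to be injective; but to avoid circularity I would prove well-definedness of $\psi$ first via the uniqueness of the filtration.

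Next I would verify $\Psi$ respects the defining relations of $\K_0(\sT)$, i.e.\ that for a distinguished triangle $t' \to t \to t''$ in $\sT$ we have $\psi(t) = \psi(t') + \psi(t'')$ in $\K_0^{\split}(\sC)$. The idea is to splice a filtration of $t'$ with one of $t''$ to produce a filtration of $t$: concretely, refine using the octahedral axiom to interleave the triangles from both towers and then merge co-slices sitting in the same suspension degree $\Sigma^j$ (replacing $\Sigma^j c' \to \Sigma^j c''$-type data by $\Sigma^j(c' \oplus c'')$ when the connecting map can be arranged to vanish, which it can after reordering since within a fixed degree the relevant Hom groups are controlled by the orthogonality). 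This is the step I expect to be the main obstacle: carefully producing, from filtrations of $t'$ and $t''$, a genuine filtration of $t$ of the form in Proposition \ref{pro:filtration} with strictly increasing indices, handling the bookkeeping of merging coinciding degrees. Finally, once both $\Phi$ and $\Psi$ are shown to be well-defined homomorphisms, $\Phi \circ \Psi = \id$ follows from the additivity of $[-]$ over the triangles in the tower (telescoping $[t] = \sum_m [\Sigma^{j_m} c_m]$), and $\Psi \circ \Phi = \id$ follows because for $c \in \sC$ the one-step triangle $0 \to c \to c$ is itself a valid filtration, giving $\psi(c) = [c]$; both composites being the identity on generators, they are the identity.
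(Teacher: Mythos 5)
There is a genuine gap at exactly the step on which everything else rests: the well-definedness of $\psi(t)=\sum_m (-1)^{j_m}[c_m]$. Your proposed route --- a Jordan--H\"older-type uniqueness asserting that the multiset $\{(j_m,c_m)\}$ is determined by $t$ up to reordering and isomorphism --- is false. A diagram as in Proposition \ref{pro:filtration} is far from unique: given such a diagram for $t$ with top index $j_n$ and any $0\neq c\in\sC$, append the split triangle $t \rightarrow t\oplus\Sigma^{j_n+1}c \rightarrow \Sigma^{j_n+1}c$ followed by the triangle $t\oplus\Sigma^{j_n+1}c \stackrel{(1,\,0)}{\longrightarrow} t \rightarrow \Sigma^{j_n+2}c$, the latter being the direct sum of the triangles $t \stackrel{\id}{\rightarrow} t \rightarrow 0$ and $\Sigma^{j_n+1}c \rightarrow 0 \rightarrow \Sigma^{j_n+2}c$. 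This is again a diagram of the required form with strictly increasing indices, but its pieces contain two extra copies of $c$ (and similar padding can be inserted into interior gaps). So the orthogonality $\sT(\sA,\sB)=0$ does not force the co-slices to be determined; only the alternating sum in $\K_0^{\split}(\sC)$ is filtration-independent, and that invariance is essentially the content of the proposition, not something you may assume. Your fallback --- deducing uniqueness of the sum from injectivity of $\Phi$ --- is, as you yourself note, circular, so the inverse map is never actually constructed.

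The remaining steps are fine in outline but cannot fill this hole. The splicing and reordering argument for additivity of $\psi$ over triangles (swapping out-of-order pieces using $\sT(\sC,\Sigma^{\geq 1}\sC)=0$ and merging pieces of equal degree) is exactly the kind of manipulation the paper carries out with Lemmas \ref{lem:36} and \ref{lem:37} in the proof of Proposition \ref{pro:deformation}, so it is believable; but it only computes $\psi$ on one particular filtration of $t$, which is vacuous without filtration-independence. Note also that the paper itself gives no proof of Proposition \ref{pro:K}: it is quoted from \cite[thm.\ 5.3.1]{Bo2}, where the well-definedness is obtained by a genuinely nontrivial argument (Bondarko's weight-complex techniques, under which the class $\sum_m(-1)^{j_m}[c_m]$ appears as a homotopy-invariant Euler characteristic), not by uniqueness of the pieces. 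To repair your proof you would need an argument of that kind --- for instance a comparison of two arbitrary filtrations up to contractible padding, or the weight complex formalism --- in place of the false uniqueness claim.
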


\section{Co-slicings}
\label{sec:co-slicings}

This section introduces co-slicings.  They are a mirror image of the
slicings of \cite[def.\ 3.3]{B}.

\begin{Definition}
\label{def:4}
A co-slicing $\cQ$ in $\sT$ is a collection of full subcategories
$\cQ( \varphi )$ closed under direct sums and summands, indexed
by $\varphi \in \BR$ and satisfying the following conditions.
\begin{enumerate}
\setcounter{enumi}{0}

  \item  $\cQ( \varphi + 1 ) = \Sigma \cQ( \varphi )$.

\smallskip

  \item  $\varphi_1 < \varphi_2
\;\Rightarrow\; \sT \big( \cQ( \varphi_1 ) , \cQ( \varphi_2 ) \big) = 0$.

\smallskip

  \item  For each object $t \neq 0$ of $\sT$, there is a diagram
\[
  \xymatrix @-2.5pc @! {
    0 \cong t_0 \ar[rr] & & t_1 \ar[rr] \ar[dl] & & t_2 \ar[rr] \ar[dl] & & \cdots \ar[rr] & & t_{n-1} \ar[rr] & & t_n \cong t \ar[dl] \\
    & q_1 \ar@{~>}[ul] & & q_2 \ar@{~>}[ul] & & & & & & q_n \ar@{~>}[ul] & \\
                       }
\]
consisting of distinguished triangles, where $q_i \in \cQ( \varphi_i )$
and $\varphi_1 < \cdots < \varphi_n$.

\end{enumerate}
\end{Definition}

Note that (i) and (ii) are continuous versions of (i) and (ii) in
Definition \ref{def:co-t-structure} while (iii) is a continuous
version of Proposition \ref{pro:filtration}.

\begin{Lemma}
\label{lem:9}
Let $\cQ$ be a co-slicing in $\sT$ and consider the diagram from
Definition \ref{def:4}(iii).  For each $j$, there is an obvious
morphism $t_j \rightarrow t$ which we complete to a distinguished
triangle
\[
  t_j \rightarrow t \rightarrow e_j.
\]
Then for each $j$ there is a diagram
\[
  \xymatrix @-2.6pc @! {
    e_{j} \ar[rr] & & e_{ j+1 } \ar[rr] \ar@{~>}[dl] & & e_{ j+2 } \ar[rr] \ar@{~>}[dl] & & \cdots \ar[rr] & & e_{ n-1 } \ar[rr] & & e_n \cong 0 \ar@{~>}[dl] \\
    & q_{ j+1 } \ar[ul] & & q_{ j+2 } \ar[ul] & & & & & & q_n \ar[ul] & \\
                       }
\]
consisting of distinguished triangles.

This diagram and the one from Definition \ref{def:4}(iii) show
\[
  t_j \in 
  \big( \cQ( \varphi_1 ) \cup \cdots \cup \cQ( \varphi_j ) \big)^{-}, 
  \;\;\;
  e_j \in 
  \big( \cQ( \varphi_{ j+1 } ) \cup \cdots \cup \cQ( \varphi_n ) \big)^{-}.
\]
Recall that $( \;\; )^-$ denotes closure under extensions.
\end{Lemma}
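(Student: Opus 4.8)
The statement has two halves: produce the "upper triangle" of $e_j$'s, and then read off the two membership assertions. The natural idea is that the $e_j$'s should behave like a truncated, shifted version of the original filtration of $t$, with the first $j$ pieces stripped off. So the plan is first to construct the triangles $e_j \to e_{j+1} \to q_{j+1}$ by an octahedral argument, and then to chain them together to obtain the displayed diagram and the two containments.

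\emph{Constructing the connecting triangles.} For each $j$ we have the composite $t_{j-1} \to t_j \to t$, and completing these to triangles gives $t_{j-1} \to t \to e_{j-1}$ and $t_j \to t \to e_j$. From the original filtration we also have the triangle $t_{j-1} \to t_j \to q_j$ (that is, $\xymatrix{t_{j-1} \ar[r] & t_j \ar[r] & q_j \ar@{~>}[r] & \Sigma t_{j-1}}$). Now I would apply the octahedral axiom to the composable pair $t_{j-1} \to t_j \to t$. The three mapping cones are $q_j$, $e_j$, and $e_{j-1}$ respectively, and the octahedron yields a distinguished triangle
\[
  q_j \rightarrow e_{j-1} \rightarrow e_j \rightsquigarrow \Sigma q_j ,
\]
i.e.\ exactly $e_{j-1} \to e_j \to q_j$ with connecting map $e_j \rightsquigarrow \Sigma q_j$, matching the wiggly arrows in the displayed diagram (after reindexing $j-1 \rightsquigarrow j$). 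Running this for all relevant indices and noting $e_n \cong 0$ (since $t_n \cong t$, so $t_n \to t$ is an isomorphism and its cone vanishes) assembles precisely the diagram in the statement. The only care needed is bookkeeping of which object is the cone of which composite, and checking the edge cases $j=0$ (where $e_0 \cong t$) and $j = n$.

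\emph{Reading off the two containments.} For $t_j$, the original diagram in Definition \ref{def:4}(iii) exhibits it as iterated cone: $t_0 \cong 0$, and each triangle $t_{i-1} \to t_i \to q_i$ shows $t_i$ is an extension of $q_i$ by $t_{i-1}$; inducting up to $i=j$ gives $t_j \in (\cQ(\varphi_1) \cup \cdots \cup \cQ(\varphi_j))^{-}$, using that $(\;\;)^-$ is closed under extensions and that each $q_i \in \cQ(\varphi_i)$. Symmetrically, the newly constructed upper triangles $e_{i-1} \to e_i \to q_i$ together with $e_n \cong 0$ let me induct downward from $i=n$ to $i=j+1$: $e_{n} \cong 0$, and $e_{i-1}$ is an extension of $e_i$ by $q_i$ (reading the triangle $q_i \to e_{i-1} \to e_i$ the other way, $e_{i-1}$ sits in a triangle with $e_i$ and $q_i$), so $e_j \in (\cQ(\varphi_{j+1}) \cup \cdots \cup \cQ(\varphi_n))^{-}$.

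\emph{Main obstacle.} Nothing here is deep; the one genuinely fiddly point is the octahedral bookkeeping — making sure the cone of $t_{j-1} \to t$ really is $e_{j-1}$ and not $e_{j-1}$ up to some shift or sign, and that the induced map $e_{j-1} \to e_j$ is compatible with the given maps $t \to e_{j-1}$ and $t \to e_j$. I would set this up once carefully with the octahedral diagram drawn explicitly, then let the induction run. A secondary, purely cosmetic, issue is index alignment between "the triangle produced by the octahedron" and "the triangle as drawn in the displayed diagram"; I'd fix the convention at the outset so the two match on the nose.
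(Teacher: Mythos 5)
Your proposal is correct and matches the paper's argument: the paper also applies the octahedral axiom to the composable pair $t_{j-1} \rightarrow t_j \rightarrow t$ (organised as a descending induction on $j$, starting from $e_n \cong 0$) to obtain the triangles $q_j \rightarrow e_{j-1} \rightarrow e_j \rightsquigarrow \Sigma q_j$, and the two containments are then read off from the two filtration diagrams exactly as you do. Just be careful with the loose rephrasing ``$e_{j-1} \rightarrow e_j \rightarrow q_j$'': the correct form is the one you wrote first, $q_j \rightarrow e_{j-1} \rightarrow e_j \rightsquigarrow \Sigma q_j$, which is what the displayed diagram encodes under the paper's wiggly-arrow convention.
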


\begin{proof}
We use descending induction on $j$.  The case $j = n - 1$ is clear.
The induction step is carried out by applying the octahedral axiom to
the composable morphisms $t_{ j-1 } \rightarrow t_j \rightarrow t$ to
get the following $3 \times 3$ diagram of distinguished triangles.
\[
  \xymatrix @-0.7pc @! {
    t_{ j-1 } \ar[r] \ar[d] & t_j \ar[r] \ar[d] & q_j \ar[d] \\
    t \ar@{=}[r] \ar[d] & t \ar[r] \ar[d] & 0 \ar[d] \\
    e_{ j-1 } \ar[r] & e_j \ar[r] & \Sigma q_j
                       }
\]
\end{proof}

\begin{Definition}
\label{def:6}
Let $\cQ$ be a co-slicing in $\sT$.  For $I \subseteq \BR$ we define
a full subcategory of $\sT$ by
\[
  \cQ( I ) = \Big( \bigcup_{ \varphi \in I } \cQ( \varphi ) \Big)^+.
\]
Recall that $( \;\; )^+$ denotes closure under extensions and direct
summands.

As a shorthand, we combine this with inequality signs in an obvious
way; for instance, $\cQ( < a) = \cQ \big(\, ] -\infty , a [ \,\big)$.
\end{Definition}

Definition \ref{def:4}(i) implies
\begin{equation}
\label{equ:SigmaI}
  \Sigma \cQ( I ) = \cQ( \Sigma I )
\end{equation}
where $\Sigma I = \{\, i + 1 \,|\, i \in I \,\}$.  Definition
\ref{def:4}(iii) implies $\cQ( \BR ) = \sT$.

\begin{Lemma}
\label{lem:20}
Let $\cQ$ be a co-slicing in $\sT$.  For $a \leq b$ in $\BR$ we have
\[
  {}^{\perp} \cQ( > b ) \cap \cQ( \leq a )^{\perp}
    = \cQ \big(\, ] a , b ] \,\big).
\]
\end{Lemma}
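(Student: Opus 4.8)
The plan is to prove the two inclusions separately, using the filtration from Definition \ref{def:4}(iii) together with the ``initial segment / final segment'' description of $t_j$ and $e_j$ provided by Lemma \ref{lem:9}.

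For the inclusion $\cQ\big(\,]a,b]\,\big) \subseteq {}^{\perp}\cQ(>b) \cap \cQ(\leq a)^{\perp}$, I would first check the claim for the building blocks $\cQ(\varphi)$ with $\varphi \in \,]a,b]$: if $\varphi \leq b$ and $\psi > b$ then $\varphi < \psi$, so $\sT\big(\cQ(\varphi),\cQ(\psi)\big)=0$ by Definition \ref{def:4}(ii), giving $\cQ(\varphi)\subseteq {}^{\perp}\cQ(>b)$; dually, if $\varphi > a$ and $\psi \leq a$ then $\sT\big(\cQ(\psi),\cQ(\varphi)\big)=0$, giving $\cQ(\varphi)\subseteq\cQ(\leq a)^{\perp}$. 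The key point is then that each of ${}^{\perp}\cQ(>b)$ and $\cQ(\leq a)^{\perp}$ is closed under extensions and direct summands — a standard fact about orthogonal subcategories in a triangulated category, since $\sT(-, x)$ and $\sT(x,-)$ are cohomological functors. Hence both are closed under the operation $(\ \,)^+$, and since $\cQ\big(\,]a,b]\,\big)$ is by Definition \ref{def:6} exactly $(\ \,)^+$ applied to the union of the $\cQ(\varphi)$, $\varphi\in\,]a,b]$, the inclusion follows.

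For the reverse inclusion, let $t \in {}^{\perp}\cQ(>b)\cap\cQ(\leq a)^{\perp}$, $t \neq 0$, and take the filtration of Definition \ref{def:4}(iii) with $q_i \in \cQ(\varphi_i)$, $\varphi_1 < \cdots < \varphi_n$. I would split the indices into those $\varphi_i \leq a$, those with $a < \varphi_i \leq b$, and those with $\varphi_i > b$; say the first block ends at index $p$ and the middle block ends at index $q$, so $t_p \in \big(\cQ(\varphi_1)\cup\cdots\cup\cQ(\varphi_p)\big)^{-}\subseteq\cQ(\leq a)^{-}$ and $e_q \in \big(\cQ(\varphi_{q+1})\cup\cdots\cup\cQ(\varphi_n)\big)^{-}\subseteq\cQ(>b)^{-}$ by Lemma \ref{lem:9}. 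The aim is to show that the first block is empty (i.e.\ $\varphi_1 > a$) and the third block is empty (i.e.\ $\varphi_n \leq b$), so that all $q_i$ lie in $\cQ\big(\,]a,b]\,\big)$, forcing $t = t_n \in \big(\bigcup_{\varphi\in\,]a,b]}\cQ(\varphi)\big)^{-}\subseteq\cQ\big(\,]a,b]\,\big)$. For the third block: the triangle $t_q \to t \to e_q$ exhibits, after applying $\sT(-,e_q)$ and using that $\sT(t,e_q)=0$ (because $t \perp \cQ(>b) \supseteq \cQ(>b)^{-}$ — here one also needs that ${}^{\perp}(-)$ only depends on the class up to extensions, i.e.\ ${}^{\perp}\cQ(>b)={}^{\perp}\big(\cQ(>b)^{-}\big)$), that the morphism $t_q \to t$ is a split epimorphism composed appropriately; chasing this I would conclude $e_q$ is a summand of $\Sigma^{-1}$ of something, and more directly that $e_q = 0$ once one observes $\sT(t_q,e_q)$ controls the connecting map. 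The cleanest route is: $\sT(t,e_q)=0$ means the map $t\to e_q$ is zero, so the triangle $t_q\to t\to e_q$ splits, so $e_q$ is a direct summand of $\Sigma t_q$; but $e_q \in \cQ(>b)^{-}$ and $\Sigma t_q$ built from $\cQ(\leq b)$-pieces (since $\varphi_i \leq b$ for $i \leq q$), and $\sT\big(\cQ(>b)^{-},\cQ(\leq b+1)^{-}\big)$-type orthogonality — combined with $e_q$ being a summand of $\Sigma t_q$ — forces $e_q=0$, whence $\varphi_n\leq b$. Symmetrically, $t\in\cQ(\leq a)^{\perp}$ together with the triangle $t_p\to t\to e_p$ and $\sT(t_p,t)=0$ forces $t_p=0$, whence $\varphi_1 > a$.

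The main obstacle I anticipate is the vanishing argument showing $e_q = 0$ (and dually $t_p=0$): one must carefully justify that $t\in{}^{\perp}\cQ(>b)$ implies $t\in{}^{\perp}\big(\cQ(>b)^{-}\big)$ so that $\sT(t,e_q)=0$, and then rule out $e_q$ being a nonzero summand of $\Sigma t_q$ by a self-orthogonality argument — namely that a nonzero object cannot simultaneously lie in $\cQ(\leq b)^{-}$ (up to suspension, as a summand of $\Sigma t_q$) and in $\cQ(>b)^{-}$, since $\sT\big(\cQ(\leq c)^{-},\cQ(>c)^{-}\big)=0$ for all $c$ forces $\id$ on such an object to be zero. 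I would isolate this self-orthogonality observation as the crux and derive it from Definition \ref{def:4}(ii) by extension-closure of $\mathrm{Hom}$-vanishing on each variable. Everything else is bookkeeping with the filtration indices and the octahedral-derived diagram of Lemma \ref{lem:9}.
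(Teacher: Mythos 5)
Your inclusion $\cQ\big(\,]a,b]\,\big)\subseteq{}^{\perp}\cQ(>b)\cap\cQ(\leq a)^{\perp}$ is fine and is exactly the paper's "clear" direction. The gap is in the reverse inclusion, at precisely the step you flag as the crux: the claims $e_q=0$ and $t_p=0$ are not just hard to prove, they are false. From $\sT(t,e_q)=0$ you correctly deduce that $e_q$ is a direct summand of $\Sigma t_q$ (equivalently $t_q\cong t\oplus\Sigma^{-1}e_q$), but the orthogonality you then invoke is unavailable: the pieces of $\Sigma t_q$ lie in $\cQ(\varphi_1+1),\dots,\cQ(\varphi_q+1)$, with phases as large as $b+1$, while the pieces of $e_q$ only have phases strictly greater than $b$, and Definition \ref{def:4}(ii) gives vanishing of $\sT$ only from strictly smaller into strictly larger phase; so $\sT(\Sigma t_q,e_q)$ need not vanish and $\id_{e_q}$ is not killed. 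Indeed the conclusion itself fails: whenever there are nonzero $t,q'\in\cQ(\varphi_0)$ with $a<\varphi_0\leq b<\varphi_0+1$, the direct sum of the distinguished triangles $t\stackrel{\id}{\rightarrow}t\rightarrow 0$ and $q'\rightarrow 0\rightarrow\Sigma q'$ gives a filtration of $t$ as in Definition \ref{def:4}(iii) with factors $q_1=t\oplus q'\in\cQ(\varphi_0)$ and $q_2=\Sigma q'\in\cQ(\varphi_0+1)$; here $t$ does lie in ${}^{\perp}\cQ(>b)\cap\cQ(\leq a)^{\perp}$, yet the outer factor $e_1=\Sigma q'$ is nonzero (a concrete instance occurs already in $\Dc\big(k[X]/(X^2)\big)$ with the co-slicing concentrated in phases $\tfrac{1}{2}+\BZ$). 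The dual padding at the bottom refutes $t_p=0$ in the same way: $\sT(t_p,t)=0$ only yields $e_p\cong t\oplus\Sigma t_p$, not $t_p=0$. Since Definition \ref{def:4}(iii) only asserts the existence of some filtration, you cannot assume the one you are handed avoids such padding.

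The repair is to aim lower: do not try to kill the outer pieces, but split $t$ off as a direct summand and use that $\cQ\big(\,]a,b]\,\big)$ is closed under direct summands (the $(\;\;)^{+}$ in Definition \ref{def:6} is exactly what makes this work). This is the paper's route: with $\ell$ the largest index such that $\varphi_{\ell}\leq b$, the vanishing $\sT(t,e_{\ell})=0$ gives $t_{\ell}\cong t\oplus t'$ with $t'=\Sigma^{-1}e_{\ell}$, and truncating the filtration exhibits $t\oplus t'$ with pieces of phases at most $b$; then, with $m$ maximal such that $\varphi_m\leq a$, Lemma \ref{lem:9} applied to the truncated filtration gives a distinguished triangle $t_m\rightarrow t\oplus t'\rightarrow f_m$ with $f_m\in\cQ\big(\,]a,b]\,\big)$, and $\sT(t_m,t)=0$ lets one decompose this triangle as the direct sum of $0\rightarrow t\stackrel{=}{\rightarrow}t$ and $t_m\rightarrow t'\rightarrow f'_m$, so that $t$ is a direct summand of $f_m$ and hence lies in $\cQ\big(\,]a,b]\,\big)$. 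So your outline needs to be rerouted at exactly the point you identified as the main obstacle; the bookkeeping surrounding it is otherwise sound.
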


\begin{proof}
The inclusion $\supseteq$ is clear from Definition \ref{def:4}(ii).

To see $\subseteq$, let
\begin{equation}
\label{equ:lem20a}
  t \in {}^{\perp} \cQ( > b ) \cap \cQ( \leq a )^{\perp}
\end{equation}
and consider the diagrams from Definition \ref{def:4}(iii) and Lemma
\ref{lem:9}.  The lemma implies
\begin{equation}
\label{equ:lem20b}
  t_j \in \cQ \big(\, [ \varphi_1 , \varphi_j ] \,\big)
  \;\; \mbox{and} \;\;
  e_j \in \cQ \big(\, [ \varphi_{ j+1 } , \varphi_n ] \,\big).
\end{equation}

If $b < \varphi_n$ then let $\ell$ be minimal with $b < \varphi_{\ell
  + 1}$.  Then $\sT( t , e_{\ell} ) = 0$ by \eqref{equ:lem20a} and
\eqref{equ:lem20b} so the distinguished triangle $\Sigma^{-1}e_{\ell}
\rightarrow t_{\ell} \rightarrow t$ is split and we have $t_{\ell}
\cong t \oplus t'$ where $t' = \Sigma^{-1}e_{\ell}$.  Truncating the
diagram from Definition \ref{def:4}(iii) gives
\begin{equation}
\label{equ:lem8}
\!\!\!\!\!\!\!\!\!\!\!\!\!\!\!\!\!\!\!\!\!\!
\vcenter{
  \xymatrix @-2.5pc @! {
    0 \cong t_0 \ar[rr] & & t_1 \ar[rr] \ar[dl] & & t_2 \ar[rr] \ar[dl] & & \cdots \ar[rr] & & t_{\ell - 1} \ar[rr] & & t_{\ell} \; \lefteqn{\cong t \oplus t'} \ar[dl] \\
    & q_1 \ar@{~>}[ul] & & q_2 \ar@{~>}[ul] & & & & & & q_{\ell} \ar@{~>}[ul] & \\
                       }
        }
\end{equation}
with $q_j \in \cQ( \varphi_j )$ and $\varphi_1 < \cdots <
\varphi_{\ell} \leq b$.  If $\varphi_n \leq b$ then diagram
\eqref{equ:lem8} also exists with $\ell = n$ and $t' = 0$.

If $a < \varphi_1$ then diagram \eqref{equ:lem8} shows $t \oplus t'
\in \cQ \big(\, ] a , b ] \,\big)$ whence $t \in \cQ \big(\, ] a , b ]
\,\big)$ as desired.

If $\varphi_1 \leq a$ then let $m$ be maximal with $\varphi_m \leq a$.
By Lemma \ref{lem:9} applied to diagram \eqref{equ:lem8} there is a
distinguished triangle
\[
  t_m \rightarrow t \oplus t' \rightarrow f_m
\]
with
\begin{equation}
\label{equ:lem8b}
  f_m \in \big( \cQ( \varphi_{ m+1 } ) \cup \cdots \cup \cQ( \varphi_{\ell} ) \big)^-  
  \subseteq \cQ \big(\, [ \varphi_{ m+1 } , \varphi_{\ell} ] \,\big)
  \subseteq \cQ \big(\, ] a , b ] \,\big).
\end{equation}
We have $\sT( t_m , t ) = 0$ by \eqref{equ:lem20a} and
\eqref{equ:lem20b}, so the distinguished triangle is isomorphic to the
direct sum of distinguished triangles $0 \rightarrow t
\stackrel{=}{\rightarrow} t$ and $t_m \rightarrow t' \rightarrow
f'_m$.  Hence $f_m \cong t \oplus f'_m$ and so $t \in \cQ \big(\, ] a
, b ] \,\big)$ by equation \eqref{equ:lem8b}.
\end{proof}

\begin{Remark}
\label{rmk:20}
By changing the inequalities suitably, the proof also shows
\[
  {}^{\perp} \cQ( > b ) \cap \cQ( < a )^{\perp}
    = \cQ \big(\, [ a , b ] \,\big).
\]
\end{Remark}

The next lemma makes the formal connection to co-t-structures.  It is
analogous to the last part of \cite[sec.\ 3]{B}.

\begin{Lemma}
\label{lem:8}
If $\cQ$ is a co-slicing in $\sT$ then $\big( \cQ( \leq 1 ) , \cQ( > 1
) \big)$ is a bounded co-t-structure in $\sT$ with co-heart $\cQ
\big(\, ] 0 , 1 ] \,\big)$.
\end{Lemma}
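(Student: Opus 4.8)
The plan is to verify in turn the three axioms of Definition \ref{def:co-t-structure} for the pair $\big( \sA , \sB \big) = \big( \cQ( \leq 1 ) , \cQ( > 1 ) \big)$, then the boundedness condition, and finally to compute the co-heart. The easy parts first: both $\sA$ and $\sB$ are full subcategories closed under direct sums and summands straight from Definition \ref{def:6}, since $( \;\; )^+$ builds in closure under summands and, through extensions, under finite direct sums. Axiom (i) follows from \eqref{equ:SigmaI} and the evident monotonicity of $I \mapsto \cQ( I )$: indeed $\Sigma^{-1} \cQ( \leq 1 ) = \cQ( \leq 0 ) \subseteq \cQ( \leq 1 )$ and $\Sigma \cQ( > 1 ) = \cQ( > 2 ) \subseteq \cQ( > 1 )$. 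For axiom (ii) I would first record the general fact that $\sT \big( \cQ( \leq c ) , \cQ( > c ) \big) = 0$ for every $c \in \BR$: on the generating subcategories $\cQ( \varphi )$, $\cQ( \psi )$ with $\varphi \leq c < \psi$ this is Definition \ref{def:4}(ii), and vanishing of $\sT( - , - )$ in either variable passes to extension-and-summand closures because $\sT( x , - )$ and $\sT( - , y )$ are cohomological; the case $c = 1$ gives (ii).

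For axiom (iii), given $t \neq 0$ I would take the diagram of Definition \ref{def:4}(iii) with phases $\varphi_1 < \cdots < \varphi_n$, let $\ell$ be largest with $\varphi_{\ell} \leq 1$ (so $\ell = 0$ if all phases exceed $1$), and form the distinguished triangle $t_{\ell} \rightarrow t \rightarrow e_{\ell}$ of Lemma \ref{lem:9}. That lemma places $t_{\ell}$ in $\big( \cQ( \varphi_1 ) \cup \cdots \cup \cQ( \varphi_{\ell} ) \big)^{-} \subseteq \cQ( \leq 1 )$ and $e_{\ell}$ in $\big( \cQ( \varphi_{ \ell+1 } ) \cup \cdots \cup \cQ( \varphi_n ) \big)^{-} \subseteq \cQ( > 1 )$, which is exactly the triangle wanted; for $t = 0$ take the zero triangle. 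Boundedness is then a matter of counting: the diagram has only finitely many phases, so $t = t_n$ lies in $\cQ( \leq \varphi_n ) \subseteq \Sigma^m \cQ( \leq 1 )$ once the integer $m$ is large, and (using Lemma \ref{lem:9} with $j = 0$, where $e_0 \cong t$) $t$ lies in $\cQ( \geq \varphi_1 ) \subseteq \Sigma^m \cQ( > 1 )$ once $m$ is sufficiently negative; hence $\bigcup_m \Sigma^m \sA = \bigcup_m \Sigma^m \sB = \sT$.

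It remains to identify the co-heart $\sC = \sA \cap \Sigma^{-1} \sB$, which by \eqref{equ:SigmaI} equals $\cQ( \leq 1 ) \cap \cQ( > 0 )$. The inclusion $\cQ \big(\, ] 0 , 1 ] \,\big) \subseteq \sC$ is again monotonicity of $\cQ( - )$. For the reverse inclusion I would feed the Hom-vanishing from the first paragraph into Lemma \ref{lem:20}: since $\sT \big( \cQ( \leq 1 ) , \cQ( > 1 ) \big) = 0$ we get $\cQ( \leq 1 ) \subseteq {}^{\perp} \cQ( > 1 )$, and since $\sT \big( \cQ( \leq 0 ) , \cQ( > 0 ) \big) = 0$ we get $\cQ( > 0 ) \subseteq \cQ( \leq 0 )^{\perp}$, so that
\[
  \sC = \cQ( \leq 1 ) \cap \cQ( > 0 ) \subseteq {}^{\perp} \cQ( > 1 ) \cap \cQ( \leq 0 )^{\perp} = \cQ \big(\, ] 0 , 1 ] \,\big)
\]
by Lemma \ref{lem:20} with $a = 0$, $b = 1$. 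I expect this last identification of the co-heart to be the only step that is more than bookkeeping --- it is precisely the configuration Lemma \ref{lem:20} was designed for --- whereas the rest is routine manipulation of the Harder--Narasimhan-type filtration of Definition \ref{def:4}(iii) via Lemma \ref{lem:9}.
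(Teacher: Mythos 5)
Your proof is correct and follows essentially the same route as the paper: axioms (i)--(iii) via equation \eqref{equ:SigmaI}, Definition \ref{def:4}(ii) and Lemma \ref{lem:9}, boundedness from the finiteness of phases in the filtration, and the co-heart via Lemma \ref{lem:20} with $a=0$, $b=1$. The only cosmetic difference is that for the co-heart you derive the inclusions $\cQ(\leq 1)\subseteq{}^{\perp}\cQ(>1)$ and $\cQ(>0)\subseteq\cQ(\leq 0)^{\perp}$ directly from the Hom-vanishing you established, whereas the paper quotes the standard identities $\sA={}^{\perp}\sB$, $\sB=\sA^{\perp}$ for co-t-structures before applying Lemma \ref{lem:20}.
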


\begin{proof}
The co-t-structure: We must check Definition \ref{def:co-t-structure}.
The subcategories $\cQ( \leq 1 )$ and $\cQ( > 1 )$ are full and closed
under direct sums and summands by definition.  Definition
\ref{def:co-t-structure}(i) follows from equation \eqref{equ:SigmaI}.
Definition \ref{def:co-t-structure}(ii) follows from Definition
\ref{def:4}(ii).  And Definition \ref{def:co-t-structure}(iii) follows
from Lemma \ref{lem:9}. 

Boundedness: Clear by Definition \ref{def:4}, parts (i) and (iii).

The co-heart: In a co-t-structure $( \sA , \sB )$ we have $\sA =
{}^{\perp}\sB$ and $\sB = \sA^{\perp}$ whence $\Sigma^{-1}\sB =
(\Sigma^{-1} \sA)^{\perp}$, so the co-heart is $\sC = \sA \cap
\Sigma^{-1}\sB = {}^{\perp}\sB \cap (\Sigma^{-1} \sA)^{\perp}$.
Inserting the co-t-structure of this lemma gives $\sC = \cQ \big(\, ] 
0 , 1 ] \,\big)$ by Lemma \ref{lem:20}.
\end{proof}

\begin{Remark}
\label{rmk:26}
Let $\cQ$ be a co-slicing in $\sT$ and let $a < b \leq a+1$ in
$\BR$.  Then
\[
  \cQ \big(\, ] a , b ] \,\big)
  = \add \Big( \bigcup_{\varphi \in ] a , b ]} \cQ( \varphi ) \Big).
\]
The inclusion $\supseteq$ is clear, and $\subseteq$ holds because the
right hand side is closed under extensions.  In fact, any extension
between two of its objects is trivial because of Definition
\ref{def:4}, parts (i) and (ii).
\end{Remark}

\begin{Remark}
\label{rmk:26b}
Let $\cQ$ be a co-slicing in $\sT$.  Lemma \ref{lem:8} and Remark
\ref{rmk:26} imply that
\[
  \sC = \add \Big( \bigcup_{\varphi \in ] 0 , 1 ]} \cQ( \varphi ) \Big)
\]
is the co-heart of the bounded co-t-structure $\big( \cQ( \leq 1 ) ,
\cQ( > 1 ) \big)$ in $\sT$.  The group $\K_0^{\split}( \sC )$ is free
on a basis consisting of the isomorphism classes of indecomposable
objects in $\displaystyle{\bigcup_{\varphi \in ] 0 , 1 ]}} \cQ(
\varphi )$.  The group is isomorphic to $\K_0( \sT )$ by Proposition
\ref{pro:K} so is finitely generated by assumption.

It follows that $\cQ( \varphi ) \neq 0$ for only finitely many
$\varphi \in \; ] 0 , 1 ]$ and that each $\cQ( \varphi )$ has
only finitely many isomorphism classes of indecomposable objects.

Combining with Definition \ref{def:4}(i) shows that there exists $0 <
\varepsilon_0 < \frac{1}{2}$ such that within each interval $[
\varphi_0 - \varepsilon_0 , \varphi_0 + \varepsilon_0 ]$, there is at
most one $\varphi$ with $\cQ( \varphi ) \neq 0$.
\end{Remark}

\section{The metric space of co-slicings}
\label{sec:metric}

In \cite[sec.\ 6]{B} the set of slicings in a triangulated category
was turned into a metric space, and we do the same for the set
of co-slicings.  The formula in the following definition is due to
\cite[lem.\ 6.1]{B}.

\begin{Definition}
\label{def:21}
If $\cQ$ and $\cR$ are co-slicings in $\sT$, then we set
\[
  d( \cQ , \cR )
  = \inf \big\{\: \varepsilon > 0  \;\; \big| \;\; 
      \cQ( \varphi ) \subseteq 
      \cR \big(\, [ \varphi - \varepsilon , \varphi + \varepsilon ]
          \,\big) 
      \mbox{ for each } \varphi \in \BR 
         \:\big\}.
\]
\end{Definition}

\begin{Remark}
\label{rmk:21}
By Definition \ref{def:4}(i), we can replace $\BR$ by $] 0 , 1 ]$
in the formula without changing the value of $d( \cQ , \cR )$.  
\end{Remark}

\begin{Proposition}
\label{pro:23}
The function $d$ is a metric on the set of co-slicings in $\sT$. 
\end{Proposition}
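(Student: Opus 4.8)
The plan is to verify the three metric axioms for $d$ directly from Definition \ref{def:21}, following the template of \cite[sec.\ 6]{B} but paying attention to the asymmetry already visible in our definition: the condition defining $d(\cQ,\cR)$ is $\cQ(\varphi)\subseteq\cR\big(\,[\varphi-\varepsilon,\varphi+\varepsilon]\,\big)$ for all $\varphi$, which is a one-sided-looking containment, so symmetry of $d$ is the one point requiring genuine argument.

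First I would record that $d(\cQ,\cR)<\infty$ and that the infimum is attained by a closed condition. For finiteness: by Remark \ref{rmk:21} it suffices to bound, for $\varphi\in\;]0,1]$, the least $\varepsilon$ with $\cQ(\varphi)\subseteq\cR\big(\,[\varphi-\varepsilon,\varphi+\varepsilon]\,\big)$; since $\cR(\BR)=\sT$ and each object of $\cQ(\varphi)$ decomposes via a diagram as in Definition \ref{def:4}(iii) for $\cR$ into finitely many pieces in various $\cR(\psi)$, and since (Remark \ref{rmk:26b}) only finitely many $\cR(\psi)$ with $\psi\in\;]0,1]$ are nonzero, the relevant $\psi$ range over a bounded set, giving a finite bound. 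Nonnegativity is immediate, and $d(\cQ,\cR)\geq 0$ with the infimum over $\varepsilon>0$; I would note the infimum is realized because $\cR\big(\,[\varphi-\varepsilon,\varphi+\varepsilon]\,\big)$ increases with $\varepsilon$ and one can check $\cQ(\varphi)\subseteq\bigcap_{\varepsilon>\varepsilon_0}\cR\big(\,[\varphi-\varepsilon,\varphi+\varepsilon]\,\big)=\cR\big(\,[\varphi-\varepsilon_0,\varphi+\varepsilon_0]\,\big)$, the last equality holding because in a small enough window only finitely many slices contribute (Remark \ref{rmk:26b} again) and $\cQ(\varphi)$ is closed under direct summands.

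Next, $d(\cQ,\cR)=0\iff\cQ=\cR$. If $\cQ=\cR$ then $\cQ(\varphi)\subseteq\cR\big(\,[\varphi-\varepsilon,\varphi+\varepsilon]\,\big)$ trivially for all $\varepsilon>0$, so $d=0$. Conversely if $d(\cQ,\cR)=0$ then for every $\varphi$ and every $\varepsilon>0$ we have $\cQ(\varphi)\subseteq\cR\big(\,[\varphi-\varepsilon,\varphi+\varepsilon]\,\big)$; using Remark \ref{rmk:26b} to pick $\varepsilon<\varepsilon_0$ small, the window $[\varphi-\varepsilon,\varphi+\varepsilon]$ meets at most one slice of $\cR$, so $\cR\big(\,[\varphi-\varepsilon,\varphi+\varepsilon]\,\big)=\cR(\varphi)$ if $\cR(\varphi)\ne 0$ in that window, and otherwise it is $0$; either way $\cQ(\varphi)\subseteq\cR(\varphi)$. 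By symmetry of the hypothesis $d(\cR,\cQ)=0$ — which is exactly the symmetry question, so I would defer to the symmetry step — we get $\cR(\varphi)\subseteq\cQ(\varphi)$, hence equality for all $\varphi$.

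For symmetry, I would argue that $d(\cQ,\cR)\leq\varepsilon$ implies $d(\cR,\cQ)\leq\varepsilon$, and then swap. Suppose $\cQ(\varphi)\subseteq\cR\big(\,[\varphi-\varepsilon,\varphi+\varepsilon]\,\big)$ for all $\varphi$. Fix $\psi$ and $0\neq r\in\cR(\psi)$ indecomposable; decompose $r$ using the diagram of Definition \ref{def:4}(iii) for $\cQ$ into pieces $q_i\in\cQ(\varphi_i)$ with $\varphi_1<\cdots<\varphi_n$. Each $q_i\in\cQ(\varphi_i)\subseteq\cR\big(\,[\varphi_i-\varepsilon,\varphi_i+\varepsilon]\,\big)$, and applying the refined decomposition inside $\cR$ together with the vanishing $\sT\big(\cR(\psi_1),\cR(\psi_2)\big)=0$ for $\psi_1<\psi_2$ (Definition \ref{def:4}(ii)) forces all the $\varphi_i$ to lie in $[\psi-\varepsilon,\psi+\varepsilon]$: otherwise the filtration of $r$ would have a Harder–Narasimhan–type factor outside $\cR(\psi)$, contradicting $r\in\cR(\psi)$, by the usual uniqueness argument for the diagram in Definition \ref{def:4}(iii) (which I would extract as the statement that such a diagram with all factors in $\cR(\psi)$ is forced). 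Hence $r\in\cQ\big(\,[\psi-\varepsilon,\psi+\varepsilon]\,\big)$, and since $\cR(\psi)$ is generated by such $r$ under sums and summands while the right side is closed under these, $\cR(\psi)\subseteq\cQ\big(\,[\psi-\varepsilon,\psi+\varepsilon]\,\big)$; taking infima gives $d(\cR,\cQ)\leq d(\cQ,\cR)$, and symmetry follows.

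Finally the triangle inequality: if $\cQ(\varphi)\subseteq\cR\big(\,[\varphi-\varepsilon,\varphi+\varepsilon]\,\big)$ and $\cR(\psi)\subseteq\cS\big(\,[\psi-\delta,\psi+\delta]\,\big)$ for all $\varphi,\psi$, then for fixed $\varphi$ any object of $\cQ(\varphi)$ lies in $\cR\big(\,[\varphi-\varepsilon,\varphi+\varepsilon]\,\big)$, i.e.\ is built by extensions and summands from objects in $\cR(\psi)$ with $|\psi-\varphi|\leq\varepsilon$; each such $\cR(\psi)\subseteq\cS\big(\,[\psi-\delta,\psi+\delta]\,\big)\subseteq\cS\big(\,[\varphi-\varepsilon-\delta,\varphi+\varepsilon+\delta]\,\big)$, and since $\cS\big(\,[\varphi-\varepsilon-\delta,\varphi+\varepsilon+\delta]\,\big)$ is closed under extensions and direct summands (Definition \ref{def:6}), we conclude $\cQ(\varphi)\subseteq\cS\big(\,[\varphi-(\varepsilon+\delta),\varphi+(\varepsilon+\delta)]\,\big)$, whence $d(\cQ,\cS)\leq\varepsilon+\delta$; optimizing gives $d(\cQ,\cS)\leq d(\cQ,\cR)+d(\cR,\cS)$. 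The main obstacle is the symmetry step, because the defining condition of $d$ only mentions containments one way; the resolution is the uniqueness of the Harder–Narasimhan-type diagram in Definition \ref{def:4}(iii), which lets one transfer a containment $\cQ(\varphi)\subseteq\cR(\text{window})$ into the reverse containment by decomposing objects of $\cR$ through $\cQ$ and locating the factors.
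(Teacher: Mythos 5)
Your verification of the triangle inequality and (modulo symmetry) of $d(\cQ,\cR)=0\Rightarrow\cQ=\cR$ is essentially the paper's, and the finiteness argument is repairable (the uniform bound should come from the fact that only finitely many $\cQ(\varphi)$ with $\varphi\in\;]0,1]$ are nonzero and each contains only finitely many indecomposables, as in Remark \ref{rmk:26b}; the finitude of nonzero $\cR$-slices in $]0,1]$ does not by itself bound anything, since the $\cR$-decomposition of an object of $\cQ(\varphi)$ may involve phases far outside $]0,1]$). The genuine gap is in the symmetry step, which you correctly identify as the crux: your argument rests on ``the usual uniqueness argument for the diagram in Definition \ref{def:4}(iii)'', and for co-slicings no such uniqueness holds. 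Unlike Bridgeland's slicings, where the Harder--Narasimhan filtration is canonical, the diagrams of Definition \ref{def:4}(iii) are of weight-decomposition type and are highly non-unique: for an indecomposable $q\in\cQ(\varphi)$, the tower $0\rightarrow\Sigma^{-1}q\rightarrow q$ in which the second map is zero has cones $\Sigma^{-1}q\in\cQ(\varphi-1)$ and $q\oplus q\in\cQ(\varphi)$, so it is a legitimate diagram as in Definition \ref{def:4}(iii) for $q$ whose factors do not all lie in $\cQ(\varphi)$; even the set of phases occurring is not an invariant of the object. Hence your claim that an indecomposable $r\in\cR(\psi)$ cannot admit a decomposition with a factor outside the window $[\psi-\varepsilon,\psi+\varepsilon]$ --- ``contradicting $r\in\cR(\psi)$'' --- is false as stated; this is precisely the looking-glass point where the mechanical translation from \cite{B} breaks down. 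What is true is only the weaker statement that the out-of-window pieces can be split off as direct summands.

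The paper's route supplies exactly this missing splitting argument and you should use it. From $\cQ(\varphi)\subseteq\cR\big(\,[\varphi-\varepsilon,\varphi+\varepsilon]\,\big)$ for all $\varphi$ one deduces, via Definition \ref{def:4}(ii), the two-sided hom-vanishing $\cR(\psi)\subseteq{}^{\perp}\cQ(>\psi+\varepsilon)$ and $\cR(\psi)\subseteq\cQ(<\psi-\varepsilon)^{\perp}$ for all $\psi$, and then Lemma \ref{lem:20} and Remark \ref{rmk:20} identify ${}^{\perp}\cQ(>b)\cap\cQ(<a)^{\perp}$ with $\cQ\big(\,[a,b]\,\big)$, giving $\cR(\psi)\subseteq\cQ\big(\,[\psi-\varepsilon,\psi+\varepsilon]\,\big)$. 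The content of that lemma is that in an arbitrary decomposition the high-phase tail and the low-phase head split off because the relevant Hom-groups vanish, so the given object is a direct summand of an in-window object and one concludes using that $\cQ(I)$ is closed under direct summands --- not that the out-of-window factors are absent. Without this (or an equivalent substitute) your symmetry step, and with it the proposition, is not proved; your $d=0$ argument also inherits the gap, since it defers the reverse inclusion to symmetry.
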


\begin{proof}
(i) $d( \cQ , \cR ) < \infty$: By Remark \ref{rmk:26b} the subcategory
$\cQ( \varphi )$ is non-zero for only finitely many $\varphi \in
\; ] 0 , 1 ]$, and for each $\varphi$ it has only finitely many
isomorphism classes of indecomposable objects.  Using Definition
\ref{def:4}(iii), this implies that there is an $\varepsilon > 0$
such that $\cQ( \varphi ) \subseteq \cR \big(\, [ \varphi
- \varepsilon , \varphi + \varepsilon ] \,\big)$ for each $\varphi \in
\; ] 0 , 1 ]$.  Hence $d( \cQ , \cR ) \leq \varepsilon$ by Remark
\ref{rmk:21}. 

(ii) $d( \cQ , \cR ) = d( \cR , \cQ )$: Given $\varepsilon > 0$, by
symmetry it is enough to show that if $\cQ( \varphi ) \subseteq
\cR \big(\, [ \varphi - \varepsilon , \varphi + \varepsilon ] \,\big)$
for each $\varphi$ then $\cR( \varphi ) \subseteq \cQ \big(\, [
\varphi - \varepsilon , \varphi + \varepsilon ] \,\big)$ for each
$\varphi$.  By Definition \ref{def:4}(ii), the condition $\cQ( \varphi
) \subseteq
\cR \big(\, [ \varphi - \varepsilon , \varphi + \varepsilon ] \,\big)$
for each $\varphi$ implies $\sT \big(\, \cR( \varphi ) , \cQ( >
\varphi + \varepsilon ) \,\big) = 0$ for each $\varphi$.  That is,
\[
  \cR( \varphi ) \subseteq {}^{\perp}\cQ( > \varphi + \varepsilon )
  \mbox{ for each } \varphi.
\]
Similarly, the condition implies 
\[
  \cR( \varphi ) \subseteq \cQ( < \varphi - \varepsilon )^{\perp}
  \mbox{ for each } \varphi.
\]
Together these inclusions imply $\cR( \varphi ) \subseteq \cQ \big(\,
[ \varphi - \varepsilon , \varphi + \varepsilon ] \,\big)$ for each
$\varphi$ by Remark \ref{rmk:20}.

(iii) $d( \cQ , \cS ) \leq d( \cQ , \cR ) + d( \cR , \cS )$: If $d( \cQ
, \cR ) = x$ and $d( \cR , \cS ) = y$ then there are inclusions $\cQ(
\varphi ) \subseteq \cR \big(\, [ \varphi - x - \delta , \varphi + x +
\delta ] \,\big)$ and $\cR( \varphi ) \subseteq \cS \big(\, [ \varphi
- y - \delta , \varphi + y + \delta ] \,\big)$ for each $\varphi \in
\BR$ and $\delta > 0$.  They clearly imply $\cQ( \varphi ) \subseteq
\cS \big(\, [ \varphi - (x + y) - 2\delta , \varphi
+ (x + y) + 2\delta ] \,\big)$ whence $d( \cQ , \cS ) \leq x +
y$. 

(iv) $d( \cQ , \cR ) = 0 \Rightarrow \cQ = \cR$: Let $q \in \cQ(
\varphi )$ be given.  When $d( \cQ , \cR ) = 0$, then $q \in \cR
\big(\, [ \varphi - \varepsilon , \varphi + \varepsilon ] \,\big)$ for
each $\varepsilon > 0$.  This implies $q \in {}^{\perp} \cR( > \varphi
) \cap \cR( < \varphi )^{\perp}$ by Definition \ref{def:4}(ii) whence
Remark \ref{rmk:20} gives $q \in \cR \big(\, [ \varphi , \varphi ]
\,\big) = \cR( \varphi )$.  So $\cQ( \varphi ) \subseteq \cR( \varphi
)$ and the opposite inclusion holds by symmetry.
\end{proof}

\section{Co-stability functions}
\label{sec:co-stability_functions}

This section introduces co-stability functions and the split
Harder-Narasimhan pro\-per\-ty.  They are analogues of the stability
functions and the Harder-Narasimhan property of \cite[sec.\ 2]{B}, and
will permit us to show that the co-stability manifold is divided into
subsets corresponding to bounded co-t-structures; see Remark
\ref{rmk:chambers}.

\begin{Definition}
\label{def:co-stability_function}
A co-stability function on an additive category $\sA$ is a group
homomorphism
\[
  Z : \K_0^{\split}( \sA ) \rightarrow \BC
\]
such that $Z( a ) \in H$ for each object $a \neq 0$, where
\[
  H = \{\, r \exp( i\pi\varphi ) \,|\, 0 < r, \, 0 < \varphi \leq 1 \,\}
\]
is the strict upper half plane.

The phase $\varphi( a )$ of an object $a \neq 0$ is the unique element
in $]0,1]$ for which $Z( a ) = r \exp \big( i\pi\varphi( a ) \big)$.
\end{Definition}

We need a split version of Harder-Narasimhan theory so we would like to define
an object $a \neq 0$ to be $Z$-semistable if $a', a'' \neq 0$ and $a =
a' \oplus a''$ implies $\varphi( a' ) \leq \varphi( a )$.  However, this
is equivalent to the following definition.

\begin{Definition}
\label{def:semistable}
Let $Z$ be a co-stability function on the additive category $\sA$.
An object $a \neq 0$ is called $Z$-semistable if $a = a' \oplus a''$
with $a' \neq 0$ implies $\varphi( a' ) = \varphi( a )$.
\end{Definition}

If $\sA$ is Krull-Schmidt, then $a \neq 0$ is $Z$-semistable if and
only if its indecomposable direct summands have the same phase.

\begin{Definition}
\label{def:HN}
A co-stability function $Z$ on an additive category $\sA$ is said to
have the split Harder-Narasimhan property if it satisfies the
following.
\begin{enumerate}

  \item  If $a_1, a_2 \neq 0$ are $Z$-semistable with $\varphi( a_1 ) <
\varphi( a_2 )$, then we have $\sA( a_1 , a_2 ) = 0$.

\smallskip

  \item  Each $a \neq 0$ can be written $a = a_1 \oplus \cdots \oplus
a_s$ with the $a_i$ being $Z$-semistable and $\varphi( a_1 ) < \cdots <
\varphi( a_s )$. 

\end{enumerate}
\end{Definition}

If $\sA$ is Krull-Schmidt, then (ii) is vacuous but (i) is usually
not.

\section{Co-stability conditions}
\label{sec:co-stability_conditions}

This section introduces co-stability conditions and proves a
separation result in Proposition \ref{pro:27}.  We also show the
precise relationship between co-stability conditions and
co-t-structures in Proposition \ref{pro:17}.  These results are
analogues of \cite[lem.\ 6.4]{B} and \cite[prop.\ 5.3]{B}.

\begin{Definition}
\label{def:co-stability_condition}
A co-stability condition on $\sT$ is a pair $( Z , \cQ )$, where $Z :
\K_0( \sT ) \rightarrow \BC$ is a group homomorphism and $\cQ$ a
co-slicing in $\sT$, such that
\[
  0 \neq q \in \cQ( \varphi )
  \; \Rightarrow \;
  Z( q ) = m( q ) \exp( i\pi\varphi )
\]
with $m( q ) > 0$.
\end{Definition}

\begin{Proposition}
\label{pro:27}
If $( Z , \cQ )$ and $( Z , \cR )$ are co-stability conditions in
$\sT$ and $d( \cQ , \cR ) < \frac{1}{2}$, then $\cQ = \cR$. 
\end{Proposition}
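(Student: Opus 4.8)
The plan is to exploit the fact that both co-slicings $\cQ$ and $\cR$ carry the \emph{same} central charge $Z$, so that the phase of a nonzero object is pinned down by $Z$ and is therefore independent of which co-slicing we use. Concretely, suppose $0 \neq q \in \cQ(\varphi)$. Since $d(\cQ,\cR) < \frac12$, pick $\varepsilon$ with $d(\cQ,\cR) < \varepsilon < \frac12$; then $q \in \cR\big([\varphi-\varepsilon,\varphi+\varepsilon]\big)$. By Remark \ref{rmk:26}, and using that the interval has length $2\varepsilon < 1$, this means $q$ lies in $\add$ of the $\cR(\psi)$ for $\psi \in [\varphi-\varepsilon,\varphi+\varepsilon]$; since $q$ is indecomposable-free of trivial extensions in that range, in fact $q$ is a direct sum of objects $q_\psi \in \cR(\psi)$ for finitely many such $\psi$. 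For each such nonzero summand $q_\psi$, the co-stability condition $(Z,\cR)$ forces $Z(q_\psi) = m(q_\psi)\exp(i\pi\psi)$ with $m(q_\psi)>0$; but $(Z,\cQ)$ forces $Z(q) = m(q)\exp(i\pi\varphi)$ with $m(q)>0$, and $Z(q) = \sum_\psi Z(q_\psi)$ is a sum of vectors in the open half-plane $H$ all having argument in $[\varphi-\varepsilon,\varphi+\varepsilon]$. A sum of nonzero vectors in $H$ whose arguments lie in an arc of length $2\varepsilon < \pi$ has argument strictly inside that arc, and the only way this sum can equal $m(q)\exp(i\pi\varphi)$ is if \emph{every} summand already has argument exactly $\pi\varphi$, i.e. $\psi = \varphi$ for each $\psi$ occurring. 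Hence $q \in \cR(\varphi)$, giving $\cQ(\varphi) \subseteq \cR(\varphi)$; by the symmetry of $d$ (Proposition \ref{pro:23}), the reverse inclusion holds as well, so $\cQ = \cR$.

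The first step in carrying this out is to reduce to indecomposable $q$, which is legitimate because all the categories $\cQ(\varphi)$ and $\cR(\psi)$ are closed under direct summands and because $Z$ is additive on $\K_0^{\split}$; then I would invoke Remark \ref{rmk:26} (applicable since $2\varepsilon < 1$) to decompose $q$ inside $\cR$. The second step is the elementary planar geometry: nonzero complex numbers in the strict upper half-plane $H$ with arguments confined to a sub-arc of total angular width $< \pi$ cannot cancel, and their sum has argument strictly between the extreme arguments unless all arguments coincide. The third step is to feed in the compatibility of $Z$ with both $\cQ$ and $\cR$ to force the arguments to coincide with $\pi\varphi$ exactly, and then appeal to symmetry of the metric to finish.

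I expect the main obstacle to be step two, handled carefully: one must be sure that \emph{finitely many} summands arise (this follows from Remark \ref{rmk:26b}, since $\cQ(\varphi)$ and each $\cR(\psi)$ have only finitely many indecomposables, or simply because $q$ has finitely many indecomposable summands and each lands in some $\cR(\psi)$), and one must rule out the degenerate possibility that some summand $q_\psi$ has argument exactly $\varphi$ while others have argument $\neq \varphi$ but the total still works out — this cannot happen precisely because $Z(q_\psi)$ for $\psi \neq \varphi$ contributes a vector genuinely off the ray $\mathbb{R}_{>0}\exp(i\pi\varphi)$, and the sum of such a vector with vectors on that ray and on the same side cannot return to the ray. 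A clean way to phrase this rigorously is: write $Z(q) = \sum_\psi m(q_\psi)\exp(i\pi\psi)$; taking the imaginary part after rotating by $\exp(-i\pi\varphi)$ gives $0 = \sum_\psi m(q_\psi)\sin\big(\pi(\psi-\varphi)\big)$, and since all $\psi-\varphi \in (-\varepsilon,\varepsilon) \subseteq (-\frac12,\frac12)$ the sine values $\sin(\pi(\psi-\varphi))$ can be made to have a single sign only if all of them vanish — but here they need not have a single sign, so instead one rotates to align with the \emph{smallest} occurring phase $\psi_{\min}$, getting $\mathrm{Im}\big(e^{-i\pi\psi_{\min}}Z(q)\big) = \sum_\psi m(q_\psi)\sin(\pi(\psi-\psi_{\min})) \geq 0$ with equality iff every $\psi = \psi_{\min}$; comparing with $Z(q) = m(q)e^{i\pi\varphi}$ and doing the same at $\psi_{\max}$ pins $\psi_{\min} = \psi_{\max} = \varphi$. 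That sign bookkeeping is the only delicate point; everything else is bookkeeping with the definitions and with Proposition \ref{pro:23}.
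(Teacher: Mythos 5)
Your overall strategy (pass to $\cR\big(\,[\varphi-\varepsilon,\varphi+\varepsilon]\,\big)$ via $d(\cQ,\cR)<\varepsilon<\frac{1}{2}$, invoke Remark \ref{rmk:26}, then compare phases through the common $Z$) is the right one, and the opening steps match the paper's proof. But the step your argument actually leans on is false as stated: a sum of nonzero vectors in $H$ with arguments in $[\pi(\varphi-\varepsilon),\pi(\varphi+\varepsilon)]$ can perfectly well have argument exactly $\pi\varphi$ without every summand having argument $\pi\varphi$ --- take two summands of equal modulus at phases $\varphi-\delta$ and $\varphi+\delta$. Your ``clean'' rotation bookkeeping does not repair this: rotating to $\psi_{\min}$ gives $m(q)\sin\big(\pi(\varphi-\psi_{\min})\big)=\sum_\psi m(q_\psi)\sin\big(\pi(\psi-\psi_{\min})\big)\ge 0$, hence only $\varphi\ge\psi_{\min}$ (with equality precisely when all $\psi$ coincide), and the analogous computation at $\psi_{\max}$ gives $\varphi\le\psi_{\max}$; together these yield $\psi_{\min}\le\varphi\le\psi_{\max}$, which is no contradiction when the occurring phases are spread on both sides of $\varphi$. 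So the argument, run on the total value $Z(q)$ alone, cannot force $\psi=\varphi$ for every summand.

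The gap closes as soon as you use structure you already named but did not exploit: each summand $q_\psi$ of $q$ lies in $\cQ(\varphi)$ itself, because $\cQ(\varphi)$ is closed under direct summands (Definition \ref{def:4}); hence $Z(q_\psi)$ has phase $\varphi$ as well as phase $\psi$, and $|\varphi-\psi|\le\varepsilon<\frac{1}{2}$ forces $\psi=\varphi$ --- no planar geometry is needed. Equivalently, and this is exactly the paper's proof: reduce to $q\in\ind\cQ(\varphi)$; since $\sT$ is Krull--Schmidt, Remark \ref{rmk:26} places the indecomposable $q$ inside a single $\cR(\psi)$, and comparing $Z(q)=m(q)\exp(i\pi\varphi)=m'(q)\exp(i\pi\psi)$ with $\varepsilon<\frac{1}{2}$ gives $\psi=\varphi$, whence $\cQ(\varphi)\subseteq\cR(\varphi)$ and symmetry finishes. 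Note the tension in your write-up: if you genuinely carry out your announced reduction to indecomposable $q$, there is only one $\psi$ in your decomposition and the vector-sum discussion is vacuous; as written, the text instead treats general $q$ through the flawed geometric claim, and that is where the proof breaks.
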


\begin{proof}
When $d( \cQ , \cR ) < \frac{1}{2}$ holds, Definition \ref{def:21}
implies that there is $\varepsilon < \frac{1}{2}$ such that $\cQ(
\varphi ) \subseteq \cR \big(\, ] \varphi - \varepsilon , \varphi +
\varepsilon ] \,\big)$ for each $\varphi$.  That is,
\[
  \cQ( \varphi ) \subseteq
  \add \Big( \bigcup_{\psi \in ] \varphi - \varepsilon , \varphi + \varepsilon ]} \cR( \psi ) \Big)
\]
for each $\varphi$ by Remark \ref{rmk:26}.  So if $q \in \ind \cQ(
\varphi )$ then $q \in \ind \cR( \psi )$ for a $\psi \in \; ] \varphi
- \varepsilon , \varphi + \varepsilon ]$.  Since $( Z , \cQ )$ and $(
Z , \cR )$ are co-stability conditions we get $Z( q ) = m( q ) \exp(
i\pi\varphi )$ and $Z( q ) = m'(q) \exp( i\pi\psi )$ with $m( q ) , m(
q' ) > 0$, and then $\psi = \varphi$ since $\varepsilon <
\frac{1}{2}$.  Hence $q \in \ind \cR( \varphi )$ and we learn $\cQ(
\varphi ) \subseteq \cR( \varphi )$.  The opposite inclusion holds by
symmetry.
\end{proof}

\begin{Proposition}
\label{pro:17}
Giving a co-stability condition on $\sT$ is equivalent to giving a
bounded co-t-structure in $\sT$ and a co-stability function on its
co-heart which has the split Harder-Narasimhan property.
\end{Proposition}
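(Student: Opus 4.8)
The plan is to construct the two directions of the equivalence explicitly. Starting from a co-stability condition $(Z,\cQ)$, I would first extract a bounded co-t-structure: by Lemma \ref{lem:8}, $\big(\cQ(\leq 1),\cQ(>1)\big)$ is a bounded co-t-structure with co-heart $\sC = \cQ\big(\,]0,1]\,\big)$. The group homomorphism $Z : \K_0(\sT)\to\BC$ together with the isomorphism $\K_0^{\split}(\sC)\stackrel{\sim}{\to}\K_0(\sT)$ of Proposition \ref{pro:K} induces a homomorphism $Z|_{\sC} : \K_0^{\split}(\sC)\to\BC$. I would check this is a co-stability function on $\sC$: any nonzero $c\in\sC$ lies in $\add\big(\bigcup_{\varphi\in\,]0,1]}\cQ(\varphi)\big)$ by Remark \ref{rmk:26b}, so $c$ is a direct sum of indecomposables $c_i\in\cQ(\varphi_i)$ with $\varphi_i\in\,]0,1]$, and $Z(c) = \sum_i m(c_i)\exp(i\pi\varphi_i)$ with all $m(c_i)>0$ and all phases in $]0,1]$; such a sum of vectors in the strict upper half plane $H$ lies in $H$. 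Thus $Z(c)\in H$ and $Z|_{\sC}$ is a co-stability function. For the split Harder--Narasimhan property, I would identify the $Z|_{\sC}$-semistable objects of $\sC$ with the nonzero objects of the individual subcategories $\cQ(\varphi)$, $\varphi\in\,]0,1]$: an indecomposable in $\cQ(\varphi)$ has phase $\varphi$ (since $Z(q)=m(q)\exp(i\pi\varphi)$), so the $Z$-semistable objects are exactly the nonzero finite direct sums of indecomposables all lying in a single $\cQ(\varphi)$. Property (i) of Definition \ref{def:HN} then follows from Definition \ref{def:4}(ii) (vanishing of $\sT$-morphisms from lower to higher phase, noting that for objects of $\sC$ ``phase'' on the co-stability-function side and ``$\varphi$-index'' on the co-slicing side agree), and property (ii) follows by decomposing a nonzero $c\in\sC$ into indecomposables and grouping them by phase.

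Conversely, starting from a bounded co-t-structure $(\sA,\sB)$ with co-heart $\sC$ and a co-stability function $W : \K_0^{\split}(\sC)\to\BC$ with the split Harder--Narasimhan property, I would build a co-slicing $\cQ$ and a homomorphism $Z$. Using Proposition \ref{pro:K}, define $Z : \K_0(\sT)\to\BC$ as $W$ composed with the inverse isomorphism $\K_0(\sT)\stackrel{\sim}{\to}\K_0^{\split}(\sC)$. For the co-slicing: for $\varphi\in\,]0,1]$ let $\cQ(\varphi)$ be $\add$ of the $W$-semistable objects of $\sC$ of phase $\varphi$ (equivalently, using the Krull--Schmidt remark after Definition \ref{def:semistable}, $\add$ of the indecomposables of $\sC$ of phase $\varphi$); extend to all $\varphi\in\BR$ by the rule $\cQ(\varphi+1)=\Sigma\cQ(\varphi)$, which forces the definition. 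Then I must verify Definition \ref{def:4}(i)--(iii). Part (i) holds by construction. Part (ii), the vanishing $\sT(\cQ(\varphi_1),\cQ(\varphi_2))=0$ for $\varphi_1<\varphi_2$, reduces after applying a suitable power of $\Sigma$ to two cases: when $\varphi_1,\varphi_2$ lie in a common fundamental domain (so both subcategories sit inside $\sC$ and the vanishing is Definition \ref{def:HN}(i)), and when they straddle an integer — here I would use $\sT(\sC,\Sigma^j\sC)=0$ for $j\geq 1$, which follows from $\sC\subseteq\sA=\Sigma^{-1}{}^{\perp}\Sigma\sB$ wait, more carefully from $\sC\subseteq\sA$, $\Sigma^j\sC\subseteq\Sigma^j\sA\subseteq\sB$ for... — actually the clean statement is $\sT(\sA,\sB)=0$ combined with $\Sigma^{-1}\sA\subseteq\sA$, $\Sigma\sB\subseteq\sB$ and $\sC=\sA\cap\Sigma^{-1}\sB$, giving $\sT(\Sigma^a\sC,\Sigma^b\sC)=0$ whenever $a>b$. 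Part (iii), the existence of the Harder--Narasimhan filtration diagram, is where the real work lies: given $t\neq 0$, apply Proposition \ref{pro:filtration} to write $t$ via a tower with subquotients $\Sigma^{j_m}c_m$, $c_m\in\sC$, $j_1<\cdots<j_n$; then refine each step by splitting $c_m = c_{m,1}\oplus\cdots\oplus c_{m,s_m}$ into its $W$-semistable pieces (Definition \ref{def:HN}(ii)) with increasing phases, and use the split nature of these decompositions to insert, between $t_{m-1}$ and $t_m$, a chain of triangles with subquotients $\Sigma^{j_m}c_{m,r}\in\cQ(j_m+\text{phase}(c_{m,r}))$; since $j_1<\cdots<j_n$ and each $j_m$ contributes phases in $]\,j_m, j_m+1]$, the resulting global list of $\varphi$-indices is strictly increasing, giving the diagram of Definition \ref{def:4}(iii).

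Finally I would check the compatibility condition of Definition \ref{def:co-stability_condition}, namely that $0\neq q\in\cQ(\varphi)$ implies $Z(q)=m(q)\exp(i\pi\varphi)$ with $m(q)>0$. For $\varphi\in\,]0,1]$ this is immediate: $q$ is a nonzero direct sum of indecomposables of $\sC$ of phase $\varphi$, and a sum of positive multiples of $\exp(i\pi\varphi)$ is again a positive multiple of $\exp(i\pi\varphi)$; for general $\varphi$ apply $\Sigma$, which multiplies $Z$ by $-1=\exp(i\pi)$ and shifts $\varphi$ by $1$. The last thing to record is that the two constructions are mutually inverse. Going co-stability condition $\to$ co-t-structure-plus-function $\to$ co-stability condition: the recovered co-t-structure is $\big(\cQ(\leq 1),\cQ(>1)\big)$ with co-heart $\cQ\big(\,]0,1]\,\big)$, and $\cQ$ is reconstructed from its restriction to $]0,1]$ because of Definition \ref{def:4}(i) — one needs that every $\cQ(\varphi)$ with $\varphi\in\,]0,1]$ really is the $\add$-span of the phase-$\varphi$ indecomposables of $\cQ\big(\,]0,1]\,\big)$, which follows from the co-stability-condition axiom (indecomposables of $\cQ(\varphi)$ have $Z$-phase $\varphi$) together with Remark \ref{rmk:26b}. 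The other composite is tautological by construction. I expect the main obstacle to be the careful bookkeeping in part (iii) above: merging the co-t-structure filtration of Proposition \ref{pro:filtration} with the split Harder--Narasimhan decomposition of each co-heart subquotient, and checking — crucially using that the decompositions $c_m=\bigoplus_r c_{m,r}$ are genuine direct sums so the intermediate triangles split off cleanly — that the $\varphi$-indices assemble into a single strictly increasing sequence and that the subquotients land in the correct $\cQ(\varphi)$.
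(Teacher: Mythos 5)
Your construction is essentially the paper's own proof: from $(Z,\cQ)$ take the bounded co-t-structure $\big(\cQ(\leq 1),\cQ(>1)\big)$ of Lemma \ref{lem:8} and transport $Z$ via Proposition \ref{pro:K}, and conversely let $\cQ(\varphi)$ for $\varphi\in\,]0,1]$ consist of the semistable objects of phase $\varphi$ in the co-heart, extended by $\Sigma$ — the paper states exactly these two maps and omits the verifications you spell out, so the extra detail (closure of $H$ under sums, reduction of axiom (ii) to Definition \ref{def:HN}(i) plus the silting-type vanishing, and the refinement of the filtration from Proposition \ref{pro:filtration} by the split Harder--Narasimhan decompositions for axiom (iii)) is a correct elaboration of the same route. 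One small slip: your ``clean statement'' should read $\sT(\Sigma^a\sC,\Sigma^b\sC)=0$ whenever $a<b$ (equivalently the version you state first, $\sT(\sC,\Sigma^j\sC)=0$ for $j\geq 1$, which follows from $\sC\subseteq\sA$ and $\Sigma^j\sC\subseteq\Sigma^{j-1}\sB\subseteq\sB$ and is what your argument actually uses), not $a>b$.
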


\begin{proof}
We describe how to map back and forth.

(i)  Let $( Z , \cQ )$ be a co-stability condition on $\sT$.  Then
$\big( \cQ( \leq 1 ) , \cQ( > 1 ) \big)$ is a bounded co-t-structure
in $\sT$ by Lemma \ref{lem:8}.  If $\sC$ is the co-heart then
Proposition \ref{pro:K} gives an isomorphism $\K_0^{\split}( \sC )
\rightarrow \K_0( \sT )$ so $Z$ can be viewed as a group homomorphism 
$Z : \K_0^{\split}( \sC ) \rightarrow \BC$.  This is a co-stability
function on $\sC$ which has the split Harder-Narasimhan property.

(ii) Conversely, let $( \sA , \sB )$ be a bounded co-t-structure in
$\sT$ with co-heart $\sC$, and let $Z$ be a co-stability function on
$\sC$ which has the split Harder-Narasimhan property.  Proposition
\ref{pro:K} means that $Z$ can be viewed as a group homomorphism $Z :
\K_0( \sT ) \rightarrow \BC$.  For $0 < \varphi \leq 1$, let $\cQ(
\varphi )$ be the full subcategory consisting of $0$ and all objects
in $\sC$ which are $Z$-semistable of phase $\varphi$; extend to other
values of $\varphi$ by setting $\cQ( \varphi + 1 ) = \Sigma \cQ(
\varphi )$.  Then $( Z , \cQ )$ is a co-stability condition.
\end{proof}

\section{Two triangle lemmas}
\label{sec:triangles}

The following two lemmas are easy consequences of the octahedral axiom
and we omit the proofs.

\begin{Lemma}
\label{lem:36}
Consider the following diagram in $\sT$ consisting of two
distinguished triangles. 
\[
  \xymatrix @-0.5pc @! {
    t_0 \ar[rr] & & t_1 \ar[rr] \ar[dl] & & t_2 \ar[dl] \\
    & c_1 \ar@{~>}[ul] & & c_2 \ar@{~>}[ul] \\
               }
\]
If $\sT( c_2 , \Sigma c_1 ) = 0$, then there is a distinguished
triangle 
\[
  \xymatrix @-2.5pc @! {
    t_0 \ar[rr] & & t_2 \ar[rr] & & c_1 \oplus c_2. \\
               }
\]
\end{Lemma}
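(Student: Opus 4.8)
The plan is to apply the octahedral axiom directly to the composition $t_0 \to t_1 \to t_2$. From the two given distinguished triangles $t_0 \to t_1 \to c_1$ and $t_1 \to t_2 \to c_2$, the octahedron produces a third distinguished triangle on the composite, fitting into a $3 \times 3$ diagram whose rows and columns are distinguished triangles: the first column is $t_0 \to t_2 \to x$, and the third row reads $c_1 \to x \to c_2$, where $x$ is the new object (the cone of $t_0 \to t_2$). Thus $x$ sits in a distinguished triangle $c_1 \to x \to c_2 \xymatrix@1{\ar@{~>}[r]&}\Sigma c_1$. The connecting morphism $c_2 \to \Sigma c_1$ lives in $\sT( c_2 , \Sigma c_1 )$, which vanishes by hypothesis, so this triangle splits and $x \cong c_1 \oplus c_2$. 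Substituting back into the first column gives the desired distinguished triangle $t_0 \to t_2 \to c_1 \oplus c_2$.

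First I would set up the octahedral axiom in the form: given $t_0 \xrightarrow{f} t_1 \xrightarrow{g} t_2$, there is a distinguished triangle $t_0 \to t_2 \to x \rightsquigarrow \Sigma t_0$ and distinguished triangles relating the three cones, which is conveniently packaged as the $3 \times 3$ diagram
\[
  \xymatrix @-0.7pc @! {
    t_0 \ar@{=}[r] \ar[d] & t_0 \ar[r] \ar[d] & 0 \ar[d] \\
    t_1 \ar[r] \ar[d] & t_2 \ar[r] \ar[d] & c_2 \ar@{=}[d] \\
    c_1 \ar[r] & x \ar[r] & c_2
               }
\]
with all rows and columns distinguished (up to the usual sign conventions). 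Then I would read off the bottom row as a distinguished triangle $c_1 \to x \to c_2$, identify its connecting morphism as an element of $\sT( c_2 , \Sigma c_1 ) = 0$, and conclude that $x \cong c_1 \oplus c_2$ by the standard fact that a distinguished triangle with zero connecting morphism splits. Reading off the middle column then yields $t_0 \to t_2 \to c_1 \oplus c_2$ as claimed.

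Since the excerpt explicitly says ``we omit the proofs'' and bills this as an ``easy consequence of the octahedral axiom,'' there is no serious obstacle; the only point requiring any care is bookkeeping — making sure the vanishing hypothesis is applied to the correct cone and the correct suspension, i.e. that the relevant connecting map really is a morphism $c_2 \to \Sigma c_1$ rather than, say, $\Sigma^{-1} c_2 \to c_1$. Once the $3 \times 3$ diagram is written down correctly, everything is immediate.
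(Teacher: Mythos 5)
Your proposal is correct and is exactly the argument the paper has in mind when it calls the lemma an ``easy consequence of the octahedral axiom'': apply the octahedron to $t_0 \to t_1 \to t_2$ to get the triangle $c_1 \to x \to c_2 \rightsquigarrow \Sigma c_1$ on the cones, use $\sT(c_2,\Sigma c_1)=0$ to split it, and substitute $x \cong c_1 \oplus c_2$ into the triangle $t_0 \to t_2 \to x$. The only blemish is a harmless slip of wording (you call $t_0 \to t_2 \to x$ the ``first column'' in the plan, while in your displayed $3\times 3$ diagram it is the middle column); the mathematics is fine.
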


\begin{Lemma}
\label{lem:37}
Consider the following distinguished triangle in $\sT$.
\[
  \xymatrix @-2.25pc @! {
    t_0 \ar[rr] & & t_2 \ar[rr] & & c_1 \oplus c_2 \\
               }
\]
There is a diagram consisting of two distinguished triangles, 
\[
\vcenter{
  \xymatrix @-0.5pc @! {
    t_0 \ar[rr] & & t'_1 \ar[rr] \ar[dl] & & t_2 \ar[dl] \\
    & c_2 \ar@{~>}[ul] & & c_1 \ar@{~>}[ul] \\
               }
        }.
\]
\end{Lemma}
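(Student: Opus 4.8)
The plan is to obtain both triangles from a single application of the octahedral axiom, taking $t'_1$ to be a shift of the mapping cone of the composite $t_2 \to c_1 \oplus c_2 \xrightarrow{\pi_1} c_1$, where $\pi_1$ is the projection onto $c_1$. First I would fix notation, writing the given distinguished triangle as $t_0 \xrightarrow{u} t_2 \xrightarrow{v} c_1 \oplus c_2 \xrightarrow{w} \Sigma t_0$ and setting $v_1 = \pi_1 v \colon t_2 \to c_1$. Three distinguished triangles are then in play. The given one, rotated once, reads $t_2 \xrightarrow{v} c_1 \oplus c_2 \xrightarrow{w} \Sigma t_0 \to \Sigma t_2$, so the mapping cone of $v$ is $\Sigma t_0$. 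The split triangle $c_2 \to c_1 \oplus c_2 \xrightarrow{\pi_1} c_1 \to \Sigma c_2$ has $\Sigma c_2$ as the mapping cone of $\pi_1$. And a distinguished triangle on $v_1$, rotated once, reads $t'_1 \xrightarrow{g} t_2 \xrightarrow{v_1} c_1 \to \Sigma t'_1$; this defines $t'_1$, with $\Sigma t'_1$ the mapping cone of $v_1$, and it is already the second of the two triangles asked for.

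Now I would apply the octahedral axiom to the composable pair $t_2 \xrightarrow{v} c_1 \oplus c_2 \xrightarrow{\pi_1} c_1$, with the above choices of triangles on $v$, on $\pi_1$, and on $v_1 = \pi_1 v$. It produces a distinguished triangle on the three mapping cones, namely $\Sigma t_0 \xrightarrow{\mu} \Sigma t'_1 \to \Sigma c_2 \to \Sigma^2 t_0$, together with the usual compatibility relations; in particular the connecting map $\Sigma t_0 \to \Sigma t_2$ of the triangle on $v$, which equals $-\Sigma u$, equals the composite of $\mu$ with the connecting map $\Sigma t'_1 \to \Sigma t_2$ of the triangle on $v_1$, which equals $-\Sigma g$. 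Applying $\Sigma^{-1}$ turns the octahedron triangle into a distinguished triangle $t_0 \xrightarrow{f} t'_1 \to c_2 \to \Sigma t_0$ (with $f = \Sigma^{-1}\mu$) --- this is the first of the two triangles asked for --- and turns the displayed compatibility into $g f = u$, which is exactly the commutativity of the diagram in the statement, with $t_0 \xrightarrow{f} t'_1 \xrightarrow{g} t_2$ as its top row.

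I expect the only real obstacle to be bookkeeping: correctly identifying the three mapping cones as $\Sigma t_0$, $\Sigma c_2$, $\Sigma t'_1$, and tracking rotations and signs so as to read $g f = u$ off the standard octahedral compatibility. This is routine, which is presumably why the paper labels the lemma an easy consequence of the octahedral axiom and omits the proof. If a more symmetric presentation is wanted, the same content repackages as a single $3 \times 3$ diagram of distinguished triangles in the style of the proof of Lemma~\ref{lem:9}: the rows are the rotated triangles on $v$ and on $v_1$ together with the trivial triangle $0 \to \Sigma c_2 \xrightarrow{=} \Sigma c_2$, and the columns are the trivial triangle on $\id_{t_2}$, the rotated split triangle on $\pi_1$, and the desired triangle $\Sigma t_0 \to \Sigma t'_1 \to \Sigma c_2$.
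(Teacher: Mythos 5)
Your proof is correct: completing $\pi_1 v\colon t_2\to c_1$ to a triangle defining $t'_1$ and applying the octahedral axiom to $t_2\xrightarrow{v}c_1\oplus c_2\xrightarrow{\pi_1}c_1$, with the rotated given triangle and the split triangle supplying the cones $\Sigma t_0$ and $\Sigma c_2$, does yield the triangle $t_0\to t'_1\to c_2$ after desuspension, and the standard compatibility gives $gf=u$ so the two triangles fit into the required tower. This is exactly the argument the paper has in mind when it calls the lemma an easy consequence of the octahedral axiom and omits the proof.
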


\section{The co-stability manifold}
\label{sec:manifold}

This section proves a deformation result in Proposition
\ref{pro:deformation}; it is an analogue of \cite[thm.\ 7.1]{B}.  As
in \cite{B}, by combining with a separation result, in our case
Proposition \ref{pro:27}, one obtains a manifold as a formal
consequence.  We formulate this as Theorem \ref{thm:manifold} which
contains Theorem A.

An important ingredient is the following technical condition on
separation which plays a role analogous to local-finiteness in
\cite{B}.

\begin{Definition}
\label{def:40}
A co-slicing $\cQ$ of $\sT$ is said to satisfy condition (S) if
\[
  q_1, q_2 \in \ind \cQ( \varphi ), \: q_1 \not\cong q_2
  \;\Rightarrow\; \sT( q_1 , q_2 ) = 0
\]
for each $\varphi$.
\end{Definition}

Let us write $\K_0( \sT )^* = \Hom_{\BZ} \big( \K_0( \sT ) , \BC
\big)$.  Since $\K_0( \sT )$ is finitely generated, $\K_0( \sT )^*$ is a
finite dimensional vector space over $\BC$; it can be equipped with
the usual topology.  Let $\Coslice( \sT )$ denote the set of
co-slicings of $\sT$ satisfying condition (S); it is a metric space by
Proposition \ref{pro:23} so in particular a topological space.
Consider the product space $\K_0( \sT )^* \times \Coslice( \sT )$.

\begin{Definition}
\label{def:Costab}
The co-stability manifold of $\sT$ is the topological subspace
\[
  \Costab( \sT ) \subseteq \K_0( \sT )^* \times \Coslice( \sT )
\]
consisting of co-stability conditions $( Z , \cQ )$.
\end{Definition}

The definition is motivated by the following theorem.

\begin{Theorem}
\label{thm:manifold}
The topological space $\Costab( \sT )$ is a topological manifold of
dimension $2n$ where $n = \rank \K_0( \sT )$. 
\end{Theorem}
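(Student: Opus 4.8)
The plan is to deduce Theorem~\ref{thm:manifold} from the two pillars that the excerpt has flagged as the crucial inputs: the separation result in Proposition~\ref{pro:27} and the deformation result in Proposition~\ref{pro:deformation} (whose statement is promised but not yet visible here). The strategy mirrors Bridgeland's derivation of \cite[thm.\ 7.1]{B}: one shows that the forgetful map
\[
  \pi \colon \Costab(\sT) \longrightarrow \K_0(\sT)^*, \qquad (Z,\cQ) \longmapsto Z,
\]
is a local homeomorphism onto an open subset of the $2n$-dimensional real vector space $\K_0(\sT)^*$, and since $\Costab(\sT)$ carries the subspace topology from the metric space $\K_0(\sT)^* \times \Coslice(\sT)$ it is Hausdorff (and second countable), hence a topological manifold of dimension $2n$.

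First I would fix a point $(Z_0,\cQ_0) \in \Costab(\sT)$ and choose $\varepsilon_0$ as in Remark~\ref{rmk:26b}, so that the finitely many nonzero slices $\cQ_0(\varphi)$ are $2\varepsilon_0$-separated in $]0,1]$. I would then restrict attention to the open ball $B = \{\,(Z,\cQ) \in \Costab(\sT) \mid d(\cQ,\cQ_0) < \tfrac{1}{2}\varepsilon_0\,\}$. Proposition~\ref{pro:deformation} should say precisely that for $Z$ sufficiently close to $Z_0$ in $\K_0(\sT)^*$ there is a \emph{unique} co-slicing $\cQ$ with $(Z,\cQ) \in \Costab(\sT)$ and $d(\cQ,\cQ_0)$ small; I would invoke it to get a well-defined map $\sigma \colon U \to \Costab(\sT)$ on an open neighbourhood $U$ of $Z_0$ in $\K_0(\sT)^*$ with $\pi\circ\sigma = \id_U$ and $\sigma(Z_0)=(Z_0,\cQ_0)$. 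The uniqueness clause is exactly Proposition~\ref{pro:27}: if $(Z,\cQ)$ and $(Z,\cR)$ both lie in $B$ then $d(\cQ,\cR)\le d(\cQ,\cQ_0)+d(\cQ_0,\cR)<\varepsilon_0<\tfrac12$, forcing $\cQ=\cR$. Hence $\pi|_B$ is injective. Combined with surjectivity of $\pi|_B$ onto $U$ (from the existence half of Proposition~\ref{pro:deformation}), $\pi|_B\colon B\to U$ is a continuous bijection with continuous inverse $\sigma$ — the continuity of $\sigma$, i.e.\ the fact that small changes in $Z$ produce metrically small changes in $\cQ$, is again part of what Proposition~\ref{pro:deformation} delivers, typically via an explicit Lipschitz-type bound $d(\cQ,\cQ_0)\le C\|Z-Z_0\|$. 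Therefore $\pi$ is a local homeomorphism, and every point of $\Costab(\sT)$ has a neighbourhood homeomorphic to an open subset of $\K_0(\sT)^*\cong \mathbb{R}^{2n}$, where $2n = 2\rank\K_0(\sT) = \dim_{\BR}\K_0(\sT)^*$.

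It remains to record that $\Costab(\sT)$ is Hausdorff and second countable so that the local Euclidean structure genuinely makes it a manifold; both are inherited from the product $\K_0(\sT)^* \times \Coslice(\sT)$, which is metrisable ($\K_0(\sT)^*$ is a finite-dimensional normed space and $\Coslice(\sT)$ is a metric space by Proposition~\ref{pro:23}), and subspaces of second-countable metrisable spaces are again second-countable metrisable. I would also remark that the chart maps $\sigma$ are mutually compatible, since on overlaps they are determined by the uniqueness in Proposition~\ref{pro:deformation}, which is what pins down the manifold as a \emph{topological} (not merely locally Euclidean) object.

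The main obstacle is not in this assembly argument, which is formal, but in the hypotheses it leans on. The real content — and the step I expect to be the crux — is establishing the deformation result Proposition~\ref{pro:deformation}: given a co-stability condition $(Z_0,\cQ_0)$ satisfying condition~(S) and a nearby homomorphism $Z$, one must \emph{construct} a co-slicing $\cQ$ whose slices are obtained by perturbing the phases of the indecomposables, verify that it still satisfies the three axioms of Definition~\ref{def:4} together with condition~(S), check that $(Z,\cQ)$ is again a co-stability condition, and control $d(\cQ,\cQ_0)$ quantitatively. Condition~(S) is what keeps the perturbed slices from developing nonzero morphisms in the wrong direction, which is precisely why the separated example in Section~\ref{sec:example2} fails without it. Since we are told we may assume Proposition~\ref{pro:deformation}, the proof of Theorem~\ref{thm:manifold} itself is the short bookkeeping above; the genuine work lives one proposition upstream.
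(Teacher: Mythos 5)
Your proposal is correct and takes essentially the same route as the paper: the paper's entire proof of Theorem \ref{thm:manifold} is the observation that separation (Proposition \ref{pro:27}) and deformation (Proposition \ref{pro:deformation}) make the forgetful map $( Z , \cQ ) \mapsto Z$ a local homeomorphism onto open subsets of $\K_0( \sT )^*$, which has real dimension $2n$, with the formal assembly left implicit as in Bridgeland. Two small touch-ups to your write-up: $\pi( B )$ need not be contained in $U$, so one should work with $B \cap \pi^{-1}( U )$ when claiming a bijection onto $U$; and second countability of $\Coslice( \sT )$ is not actually established (metrisability of the ambient product, hence Hausdorffness, is all that is available and all the paper uses).
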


As mentioned, this is a formal consequence of results on separation
and deformation which imply that the canonical map $\Costab( \sT )
\rightarrow \K_0( \sT )^*$ given by $( Z , \cQ ) \mapsto Z$ is a local
homeomorphism.  In our case, separation is by Proposition \ref{pro:27}
while deformation takes the following form.

\begin{Proposition}
\label{pro:deformation}
Let $( Z , \cQ ) \in \Costab( \sT )$ be given and let $0 <
\varepsilon_0 < \frac{1}{2}$ be as in Remark \ref{rmk:26b}. 

Assume that $0 < \varepsilon \leq \varepsilon_0$ and that $W \in
\K_0( \sT )^*$ satisfies
\[
  | W( q ) - Z( q ) | < \sin( \pi\varepsilon ) \, | Z( q ) |
\]
for each $q \in \cQ( \varphi ) \setminus 0$ with $\varphi \in \BR$.

Then there is $( W , \cR ) \in \Costab( \sT )$ such that $d( \cQ , \cR
) < \varepsilon$.
\end{Proposition}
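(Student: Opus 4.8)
The plan is to build the new co-slicing $\cR$ directly out of $\cQ$ and the new central charge $W$, then verify it is a co-slicing satisfying condition (S), that $(W,\cR)$ is a co-stability condition, and finally that $d(\cQ,\cR)<\varepsilon$. Since $\varepsilon\leq\varepsilon_0$, Remark \ref{rmk:26b} tells us that the phases $\varphi$ with $\cQ(\varphi)\neq 0$ in $]0,1]$ are finitely many and pairwise at distance $>2\varepsilon_0\geq 2\varepsilon$; call them $\varphi_1<\cdots<\varphi_r$. For a nonzero $q\in\cQ(\varphi_i)$, the hypothesis $|W(q)-Z(q)|<\sin(\pi\varepsilon)|Z(q)|$ forces $W(q)$ to lie in the open sector strictly between the rays of argument $\pi(\varphi_i-\varepsilon)$ and $\pi(\varphi_i+\varepsilon)$; in particular $W(q)\neq 0$ and its phase (normalised into a half-open interval) is well defined. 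I would first record this elementary trigonometric fact as the cornerstone: the disc of radius $\sin(\pi\varepsilon)|Z(q)|$ about $Z(q)$ is contained in the sector of half-angle $\pi\varepsilon$ about the ray through $Z(q)$.

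First I would define, for a nonzero indecomposable $q$, its new phase $\psi(q)\in\BR$ by the rule that $W(q)=m(q)\exp(i\pi\psi(q))$ with $m(q)>0$ and $\psi(q)\in\;]\varphi_i-\varepsilon,\varphi_i+\varepsilon[$ whenever $q\in\ind\cQ(\varphi_i)$ (extending $\BZ$-equivariantly so that $\psi(\Sigma q)=\psi(q)+1$, which is consistent because $W$ factors through $\K_0$ and $Z$ does too). Then for $\varphi\in\BR$ set $\cR(\varphi)=\add\{\,q\in\ind\cQ(\psi)\ \text{for some}\ \psi\ \text{with}\ \psi(q)=\varphi\,\}$, together with the zero object; because within a window of width $2\varepsilon_0$ there is at most one original phase, each $\cR(\varphi)$ is just $\add$ of a sub-collection of the indecomposables of a \emph{single} $\cQ(\varphi_i)$, so $\cR(\varphi)$ is a full subcategory closed under direct sums and summands, and $\cR(\varphi+1)=\Sigma\cR(\varphi)$ is immediate. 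Condition (S) for $\cR$ follows at once from condition (S) for $\cQ$, since $\sT(q_1,q_2)=0$ for non-isomorphic indecomposables of the same $\cQ(\varphi_i)$, and any two indecomposables in $\cR(\varphi)$ come from the same $\cQ(\varphi_i)$.

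Next I would verify Definition \ref{def:4}(ii): if $\psi(q_1)<\psi(q_2)$ I must show $\sT(q_1,q_2)=0$ for indecomposable $q_1\in\cQ(\varphi_i),q_2\in\cQ(\varphi_j)$. If $\varphi_i<\varphi_j$ this is Definition \ref{def:4}(ii) for $\cQ$. If $\varphi_i>\varphi_j$ then $\psi(q_1)>\varphi_i-\varepsilon\geq\varphi_i-\varepsilon_0>\varphi_j+\varepsilon_0\geq\varphi_j+\varepsilon>\psi(q_2)$ using the gap $\varphi_i-\varphi_j>2\varepsilon_0$, contradicting $\psi(q_1)<\psi(q_2)$; and if $\varphi_i=\varphi_j$ then $q_1\not\cong q_2$ (as $\psi(q_1)\neq\psi(q_2)$) so condition (S) for $\cQ$ gives $\sT(q_1,q_2)=0$. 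For the filtration axiom (iii): given $t\neq 0$, take its $\cQ$-filtration from Definition \ref{def:4}(iii), with factors $q_m\in\cQ(\varphi_{i_m})$, $\varphi_{i_1}<\cdots<\varphi_{i_k}$. Each $q_m$ decomposes into indecomposables whose new phases all lie in $]\varphi_{i_m}-\varepsilon,\varphi_{i_m}+\varepsilon[$, and by the gap these windows are disjoint and ordered; within a single $q_m$ the indecomposable summands of distinct new phase have no morphisms between them in either direction (condition (S)), so $q_m$ itself splits as a direct sum $\bigoplus q_{m,\ell}$ over its distinct new phases, with each $q_{m,\ell}\in\cR(\text{that phase})$. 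Using Lemmas \ref{lem:36} and \ref{lem:37} repeatedly — the octahedral bookkeeping that refines one triangle with factor $q_m=c_1\oplus c_2$ into two triangles with factors $c_1,c_2$ in the correct order, which is legitimate precisely because $\sT(c_2,\Sigma c_1)=\sT(c_1,\Sigma c_2)=0$ for these summands — I splice the $\cQ$-filtration of $t$ into a finer filtration whose factors lie in the $\cR(\varphi)$ with strictly increasing $\varphi$. \textbf{This refinement-of-filtrations step is the main obstacle}: one must be careful that after breaking up each $q_m$ the strict ordering of \emph{all} the new factors across all $m$ is preserved (guaranteed by disjointness of the windows) and that the iterated octahedra genuinely assemble into a single diagram of the shape demanded by Definition \ref{def:4}(iii); this is exactly the kind of non-mechanical splicing the introduction warns about.

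Finally, $(W,\cR)$ is a co-stability condition essentially by construction: $0\neq q\in\cR(\varphi)$ means every indecomposable summand of $q$ lies in some $\ind\cQ(\varphi_i)$ with $\psi=\varphi$, hence $W(\text{summand})$ has argument $\pi\varphi$ and positive modulus, and summing gives $W(q)=m(q)\exp(i\pi\varphi)$ with $m(q)>0$. And $d(\cQ,\cR)<\varepsilon$: by Remark \ref{rmk:21} it suffices to check $\cQ(\varphi)\subseteq\cR([\varphi-\varepsilon',\varphi+\varepsilon'])$ for $\varphi\in\;]0,1]$ and some $\varepsilon'<\varepsilon$; but $\cQ(\varphi_i)=\add\bigcup_{q\in\ind\cQ(\varphi_i)}\cR(\psi(q))$ by construction, and each $\psi(q)\in\;]\varphi_i-\varepsilon,\varphi_i+\varepsilon[$, so taking $\varepsilon'$ to be the maximum of $|\psi(q)-\varphi_i|$ over the finitely many indecomposables (which is $<\varepsilon$) does the job. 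Hence $(W,\cR)\in\Costab(\sT)$ with $d(\cQ,\cR)\leq\varepsilon'<\varepsilon$, as required.
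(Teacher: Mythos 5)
Your proposal is correct and takes essentially the same route as the paper: you define $\cR(\psi)$ by reassigning the indecomposables of $\cQ$ according to their $W$-phase (using the same sector estimate), verify axioms (i), (ii) and condition (S) by the same case analysis via Remark \ref{rmk:26b}, establish axiom (iii) with Lemmas \ref{lem:36} and \ref{lem:37}, and get $d(\cQ,\cR)<\varepsilon$ from the same finiteness argument. The only cosmetic differences are that you split each original factor by new phase in the correct order directly (the paper first refines to indecomposables and then reorders by swaps, which needs the Hom-vanishing you cite; Lemma \ref{lem:37} itself needs no hypothesis), and that you check the distance inclusion in the opposite, equivalent direction.
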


\begin{proof}
For $\psi \in \BR$ we define $\cR( \psi )$ as the full subcategory of
$\sT$ which is closed under direct sums and summands and has the
following indecomposable objects.
\[
  \ind \cR( \psi ) =
  \Bigg\{\, q \in \ind \cQ( \varphi ) \,\Bigg|\,
         \begin{array}{l}
           \psi - \varepsilon < \varphi < \psi + \varepsilon, \\[1.5mm]
           W( q ) = m'( q ) \exp( i\pi\psi ) 
           \mbox{ with } m'( q ) > 0 
         \end{array} \,\Bigg\}
\]
We will show that $\cR$ is a co-slicing satisfying condition
(S).

Definition \ref{def:4}(i) is clear for $\cR$.

Definition \ref{def:4}(ii) and condition (S):  Let $r_j \in \ind \cR(
\psi_j )$ for $j = 1 , 2$ and assume either $\psi_1 < \psi_2$ (for
Definition \ref{def:4}(ii)) or $\psi_1 = \psi_2$ and $r_1 \not\cong
r_2$ (for condition (S)).  
By definition, we have $r_j \in \ind \cQ( \varphi_j
)$ with 
\begin{equation}
\label{equ:inequality}
\psi_j - \varepsilon < \varphi_j < \psi_j + \varepsilon \mbox{ for } j=1,2.
\end{equation}
We split into three cases.

$\varphi_1 < \varphi_2$:  Then $\sT( r_1 , r_2 ) = 0$ by Definition
\ref{def:4}(ii) for $\cQ$.

$\varphi_1 = \varphi_2$: There are two possibilities.  First, we may
have $\psi_1 = \psi_2$.  Then $r_1 \not\cong r_2$ by assumption whence
$\sT( r_1 , r_2 ) = 0$ by condition (S) for $\cQ$.  Secondly, we may
have $\psi_1 < \psi_2$.  We also have $\psi_2 < \psi_1 + 2\varepsilon$ by inequality \eqref{equ:inequality},
and $2\varepsilon < 2\varepsilon_0 < 1$, so $W( r_j ) = m'( r_j )
\exp( i\pi\psi_j )$ implies $W( r_1 ) \neq W( r_2 )$.  But then $r_1
\not\cong r_2$ whence $\sT( r_1 , r_2 ) = 0$ by condition (S) for
$\cQ$.

$\varphi_1 > \varphi_2$: The inequality \eqref{equ:inequality} also gives $\varphi_1 < \varphi_2 +
2\varepsilon$, so $\varphi_1$ is certainly in the interval $[
\varphi_2 , \varphi_2 + 2\varepsilon_0 ]$ and so is $\varphi_2$.  But
by Remark \ref{rmk:26b} each closed interval of length
$2\varepsilon_0$ contains at most one $\varphi$ with $\cQ( \varphi )
\neq 0$.  This gives a contradiction with $\varphi_1 \neq \varphi_2$
and $r_j \in \ind \cQ( \varphi_j )$.

Definition \ref{def:4}(iii): We start with an observation.  If $q \in
\ind \cQ( \varphi )$, then the inequality in the proposition implies
$W( q ) = m'( q ) \exp( i\pi\psi )$ with $m'( q ) > 0$ and $\psi$
satisfying $\psi - \varepsilon < \varphi < \psi + \varepsilon$, whence
$q \in \cR( \psi )$.

Now let $t \neq 0$ in $\sT$.  Using that $\cQ$ is a co-slicing, pick a
diagram as in Definition \ref{def:4}(iii).  Using Lemma \ref{lem:37},
each distinguished triangle in the diagram can be refined to a
sequence of distinguished triangles with indecomposable third term.
This gives a diagram
\begin{equation}
\label{equ:deformation}
\vcenter{
  \xymatrix @-2.7pc @! {
    0 \cong t_0 \ar[rr] & & t_1 \ar[rr] \ar[dl] & & t_2 \ar[rr] \ar[dl] & & \cdots \ar[rr] & & t_{ p-1 } \ar[rr] & & t_p \cong t \ar[dl] \\
    & q_1 \ar@{~>}[ul] & & q_2 \ar@{~>}[ul] & & & & & & q_p \ar@{~>}[ul] & \\
                       }
        }
\end{equation}
consisting of distinguished triangles where $q_j \in \ind \cQ(
\varphi_j )$.

By the above observation, we have $q_j \in \cR( \psi_j )$ for each $j$
for certain $\psi_j \in \BR$.  Suppose that $\psi_j > \psi_{ j+1 }$
for some $j$.  Then $\sT( q_{ j+1 } , \Sigma q_j ) = 0$ by Definition
\ref{def:4}, parts (i) and (ii), which we have already shown for
$\cR$.  So Lemmas \ref{lem:36} and \ref{lem:37} imply that in diagram
\eqref{equ:deformation}, the part
\[
  \xymatrix @-1.5pc @! {
    t_{ j-1 } \ar[rr] & & t_j \ar[rr] \ar[dl] & & t_{ j+1 } \ar[dl] \\
    & q_j \ar@{~>}[ul] & & q_{ j+1 } \ar@{~>}[ul] \\
               }
\]
can be replaced with
\[
\vcenter{
  \xymatrix @-1.5pc @! {
    t_{ j-1 } \ar[rr] & & t'_j \ar[rr] \ar[dl] & & t_{ j+1 } \ar[dl] \\
    & q_{ j+1 } \ar@{~>}[ul] & & q_j \ar@{~>}[ul] \\
               }
        }.
\]
Repeating this procedure reorders the $q_j$ according to
non-decreasing values of $\psi_j$.  That is, it turns diagram
\eqref{equ:deformation} into a diagram
\[
  \xymatrix @-2.5pc @! {
    0 \cong t_0 \ar[rr] & & t'_1 \ar[rr] \ar[dl] & & t'_2 \ar[rr] \ar[dl] & & \cdots \ar[rr] & & t'_{ p-1 } \ar[rr] & & t_p \cong t \ar[dl] \\
    & r_1 \ar@{~>}[ul] & & r_2 \ar@{~>}[ul] & & & & & & r_p \ar@{~>}[ul] & \\
                       }
\]
consisting of distinguished triangles, where $r_j \in \ind \cR( \psi_j
)$ and $\psi_1 \leq \cdots \leq \psi_p$.  Neighbouring objects $r_j$
and $r_{ j+1 }$ with $\psi_j = \psi_{ j+1 }$ have $\sT( r_{ j+1 } ,
\Sigma r_j ) = 0$, again by Definition \ref{def:4}, parts (i) and
(ii), so $r_j$ and $r_{ j+1 }$ can be collected using Lemma
\ref{lem:36}.  This finally results in the desired diagram
establishing Definition \ref{def:4}(iii) for $\cR$.

To complete the proof, we must show that $( W , \cR )$ is a
co-stability condition and that $d( \cQ , \cR ) < \varepsilon$.  The
former is clear by the definition of $\cR$.  For the latter, note that
by Remark \ref{rmk:26b}, if $\psi$ is given then there are only
finitely many $\varphi$ with $\psi - \varepsilon < \varphi < \psi +
\varepsilon$ and $\cQ( \varphi ) \neq 0$.  Hence there is an
$\varepsilon' < \varepsilon$ such that it makes no difference to
replace $\varepsilon$ by $\varepsilon'$ in the definition of $\ind
\cR( \psi )$, and so $\cR( \psi ) \subseteq \cQ
\big(\, [ \psi - \varepsilon' , \psi + \varepsilon' ] \,\big)$.  This
applies to each of the finitely many $\psi \in \; ] 0 , 1 ]$ for which
$\cR( \psi ) \neq 0$; see Remark \ref{rmk:26b} again.  But then $d(
\cQ , \cR ) < \varepsilon$ by Remark \ref{rmk:21}.
\end{proof}

\begin{Remark}
\label{rmk:chambers}
By Proposition \ref{pro:17}, each point $( Z , \cQ ) \in \Costab( \sT
)$ corresponds to a pair consisting of a bounded co-t-structure in
$\sT$ and a co-stability function on its co-heart which has the split
Harder-Narasimhan property.  In particular, $\Costab( \sT )$ is
divided into subsets corresponding to different co-t-struc\-tu\-res in
$\sT$.
\end{Remark}

\section{Two group actions on the co-stability manifold}
\label{sec:group_actions}

Like the stability manifold, the co-stability manifold admits
commuting continuous left and right actions of the groups $\Aut( \sT
)$ and $\widetilde{\GL}^+( 2 , \BR )$, where $\Aut( \sT )$ is the
group of equivalence classes of triangulated autoequivalences of $\sT$
and $\widetilde{\GL}^+( 2 , \BR )$ is the universal cover of $\GL^+( 2
, \BR )$, the group of $2 \times 2$ real matrices with positive
determinant.  Indeed, we can just copy the formulae from \cite[lem.\
8.2]{B} as follows.

For $F \in \Aut( \sT )$ and $( Z , \cQ ) \in \Costab( \sT )$, set
\[
  F \cdot ( Z , \cQ ) = ( Z \circ [F]^{-1} , \cQ' )
\]
where $[F] \in \Aut \K_0( \sT )$ is induced by $F$ and $\cQ'( \varphi
) = F \big( \cQ( \varphi ) \big)$.

For $\widetilde{\GL}^+( 2 , \BR )$, we use the same description as in
\cite[sec.\ 8]{B}, so an element is a pair $( T , f )$ where $T :
\BR^2 \rightarrow \BR^2$ is an orientation preserving linear map and
$f : \BR \rightarrow \BR$ is an increasing map satisfying $f( x + 1 )
= f( x ) + 1$, such that the induced maps on $( \BR^2 \setminus 0 ) /
\BR_{ >0 }$ and $\BR / 2\BZ$ are the same when these spaces are
identified with $S^1$.  For $( T , f ) \in \widetilde{\GL}^+( 2 , \BR
)$ and $( Z , \cQ ) \in \Costab( \sT )$, set
\[
  ( Z , \cQ ) \cdot ( T , f ) = ( T^{-1} \circ Z , \cQ'' )
\]
where $\cQ''( \varphi ) = \cQ \big( f( \varphi ) \big)$.

\section{Example: The compact derived category of $k[X] / (X^2)$}
\label{sec:example1}

Let $k$ be an algebraically closed field.  The compact derived
category $\Dc \big( k[ X ] / ( X^2 ) \big)$ of the dual numbers over
$k$ is the special case $w = 0$ of $\sU$ in the next theorem, so
Theorem B in the introduction follows.

\begin{Theorem}
\label{thm:example1}
Let $w \leq 0$ be an integer and let $\sU$ be a $k$-linear algebraic
triangulated category which is idempotent complete and classically
generated by a $w$-spherical object; see \cite{HJY}.

The stability manifold of $\sU$ is the empty set.  The co-stability
manifold of $\sU$ is $\BC$.
\end{Theorem}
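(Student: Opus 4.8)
The plan is to work with the explicit description of $\sU$ and its combinatorial structure coming from \cite{HJY}. First I would recall that a $w$-spherical object $s$ (with $w \leq 0$) classically generates $\sU$, so that $\K_0(\sU)$ is free of rank $1$, generated by $[s]$; in particular $\K_0(\sU)^* \cong \BC$, which already pins down the dimension of $\Costab(\sU)$ to $2$ by Theorem \ref{thm:manifold}, once we know $\Costab(\sU)$ is non-empty. The key structural input is the classification of the indecomposable objects and their morphisms: by \cite{HJY}, the indecomposables of $\sU$ are, up to suspension, built from $s$, and (because $w \leq 0$) the graded endomorphism ring is concentrated in non-positive degrees in a way that forces $\sT(s, \Sigma^j s) = 0$ for $j > 0$ (or an analogous vanishing), so that $\add\{\Sigma^j s \mid j \in \BZ\}$ behaves like the indecomposables of a ``bounded'' co-t-structure. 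I would then identify the bounded co-t-structures in $\sU$: using \cite[cor.\ 4.7]{MSSS} they correspond to silting subcategories, and the silting subcategories of $\sU$ are exactly the $\add(\Sigma^j s)$ for $j \in \BZ$, all related by suspension. Hence, up to the $\BZ$-action of $\Sigma$, there is essentially one bounded co-t-structure, with co-heart $\add(s)$.

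Next I would pass from co-t-structures to co-stability conditions via Proposition \ref{pro:17}. A co-stability condition on $\sU$ is the same as a choice of bounded co-t-structure together with a co-stability function on its co-heart satisfying the split Harder--Narasimhan property. Since the co-heart $\add(s)$ has (up to iso) the single indecomposable $s$, a co-stability function is just a choice $Z(s) = r\exp(i\pi\varphi) \in H$, i.e.\ a point of the strict upper half plane $H$; and the split Harder--Narasimhan property (Definition \ref{def:HN}) is automatic, since (i) is vacuous when there is only one semistable phase and (ii) holds because every object of $\add(s)$ is a direct sum of copies of $s$, hence $Z$-semistable. Likewise condition (S) of Definition \ref{def:40} is automatic because $s$ is (up to iso) the unique indecomposable in each $\cQ(\varphi)$. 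Now I would assemble the global picture: varying over all bounded co-t-structures (the $\BZ$-translates under $\Sigma$) and all co-stability functions on the co-heart, together with the observation that $Z$ is determined by $Z(s) \in \BC^*$, one sees that the map $\Costab(\sU) \to \K_0(\sU)^* \cong \BC$, $(Z,\cQ)\mapsto Z$, is a bijection onto $\BC \setminus 0$ from the co-stability conditions supported on a \emph{fixed} co-t-structure, but as the co-t-structure is translated by $\Sigma$ the images ``rotate'' and tile, so that the total image is all of $\BC$ (minus $0$, but $0$ is excluded since $Z$ must be injective on a generator — I should double-check whether $0$ is in the manifold or not; by Definition \ref{def:co-stability_function} we need $Z(s) \neq 0$, so the image is $\BC^* $, which is still homeomorphic to nothing of dimension $2$ unless we are careful). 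I would therefore instead argue directly that the canonical map is a homeomorphism onto $\BC$ by gluing the charts coming from the countably many co-t-structures, using Theorem \ref{thm:manifold} to know it is a local homeomorphism and an explicit computation of how adjacent charts overlap to see the total space is connected and simply connected, hence $\cong \BC$ (not, say, $\BC^*$ or a cylinder).

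For the statement that the ordinary stability manifold is empty: here I would invoke that a stability condition requires a bounded t-structure by the t-structure analogue of Proposition \ref{pro:17} (this is \cite[prop.\ 5.3]{B}), and then show $\sU$ has \emph{no} bounded t-structure. Since $\sU$ is classically generated by $s$ with $w \leq 0$, the would-be heart would be a length category with finitely many simples coming from $s$, but the negative self-extensions (the $w \leq 0$ condition gives $\sT(s,\Sigma^{-1}s) \neq 0$ in the relevant degree) obstruct the existence of any bounded t-structure; concretely, for $w=0$ this is the classical fact that $\Dc(k[X]/(X^2))$ has no bounded t-structure because $k[X]/(X^2)$ has infinite global dimension and the derived category has no ``finite'' heart — this is exactly the motivating example from the introduction and can be cited.

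The main obstacle I anticipate is the gluing step: showing that the countably many two-real-dimensional charts (indexed by $\BZ$, one per bounded co-t-structure $\Sigma^j(\sA,\sB)$) fit together to give $\BC$ rather than some other surface. Concretely, I expect that a co-stability function on the co-heart $\add(s)$ with phase $\varphi$ close to $1$ ``continues'' to a co-stability condition on the neighbouring co-t-structure $\Sigma^{-1}(\sA,\sB)$ with phase close to $0$, and that tracking $Z(s)$ through this transition shows the charts overlap to cover the full punctured plane while the angular coordinate $\varphi$ ranges over all of $\BR$ (not just $S^1$), so the resulting space is the universal cover of $\BC^*$ — but the universal cover of $\BC^*$ is $\BC$. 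Making this precise, and checking there is no monodromy forcing a nontrivial cover, is where the real work lies; the $\widetilde{\GL}^+(2,\BR)$-action from Section \ref{sec:group_actions} should help, since it acts on $\Costab(\sU)$ and its restriction to the one-parameter subgroup of rotations implements the passage between adjacent co-t-structures, exhibiting $\Costab(\sU)$ as a torsor-like object whose underlying space is forced to be $\BC$.
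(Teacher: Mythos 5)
Your plan follows the paper's proof in all essentials: the absence of bounded t-structures and the classification of the bounded co-t-structures as the suspensions $( \Sigma^j \sA , \Sigma^j \sB )$ with co-heart $\add( \Sigma^j c )$ are both exactly the content of \cite[thm.\ A]{HJY} (your silting route via \cite{MSSS} amounts to the same input, and your global-dimension heuristic for the emptiness of the stability manifold should simply be replaced by that citation together with \cite[prop.\ 5.3]{B}), after which Proposition \ref{pro:17} reduces a co-stability condition to a choice of co-t-structure and a point of the strict upper half plane, with condition (S) and the split Harder--Narasimhan property automatic. The gluing step you flag as ``the real work'' is settled in the paper precisely by the mechanism you propose: the subgroup of $\widetilde{\GL}^+( 2 , \BR )$ consisting of rotations composed with positive scalings acts freely and transitively on $\Costab( \sU )$, so $\Costab( \sU )$ is homeomorphic to that group, i.e.\ to the universal cover of $\BC \setminus 0$, which is $\BC$.
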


\begin{proof}
By \cite[thm.\ A]{HJY}, the category $\sU$ has no non-trivial
t-structures, hence no bounded t-structures.  It follows by
\cite[prop.\ 5.3]{B} that it has no stability conditions, so the
stability manifold is the empty set.

By \cite[thm.\ A]{HJY} again, the category $\sU$ has a canonical
co-t-structure $( \sA , \sB )$, and the non-trivial co-t-structures in
$\sU$ are precisely the (de)suspensions $( \Sigma^j \sA , \Sigma^j \sB
)$ for $j \in \BZ$.  The explicit description of the canonical
co-t-structure in \cite[sec.\ 4.e]{HJY} shows that each of the
(de)suspensions is bounded.  It also shows that the co-heart $\sC =
\sA \cap \Sigma^{-1}\sB$ is equal to $\add( c )$ for a certain
indecomposable object $c$.  Hence the co-heart of $( \Sigma^j \sA ,
\Sigma^j \sB )$ is $\Sigma^j \sC = \add( \Sigma^j c )$.

Combining this with Proposition \ref{pro:17} shows that giving a
co-stability condition on $\sU$ is equivalent to giving two pieces of
data: (i) An integer $j$ specifying a bounded co-t-structure $(
\Sigma^j \sA , \Sigma^j \sB )$, and (ii) an element $z$ of the strict
upper half plane $H$ specifying a co-stability function on the
co-heart as follows.
\[
  Z : \K_0^{\split}( \Sigma^j \sC ) \rightarrow \BC, 
  \;\; Z( \Sigma^j c ) = z.
\]
The split Harder-Narasimhan property holds for $Z$ because $\Sigma^j
\sC$ has only one isomorphism class of indecomposable objects.

By the proof of Proposition \ref{pro:17}, these data correspond to the
following co-stability condition $( Z , \cQ )$: By means of
Proposition \ref{pro:K}, the above $Z$ is viewed as a group
homomorphism $Z : \K_0( \sU ) \rightarrow \BC$; it still satisfies $Z(
\Sigma^j c ) = z$.  And writing $z = r \exp( i\pi\varphi )$ with $r >
0$, $\varphi \in \; ] 0 , 1 ]$, the co-slicing $\cQ$ is given by $\cQ(
\varphi ) = \add( \Sigma^j c )$ and $\cQ$ equal to zero on the rest of
the interval $] 0 , 1 ]$.

This co-stability condition can also be described by giving the unique
$\varphi_0 \in \BR$ for which $\cQ( \varphi_0 ) = \add( c )$, along
with the complex number $Z( c ) = z_0$ which has the form $z_0 = s
\exp( i\pi\varphi_0 )$ for some $s > 0$.  Abusing notation, we write
$( Z , \cQ ) = ( z_0 , \varphi_0 )$.

Each co-stability condition clearly satisfies condition (S).

Let $G$ be the closed subgroup of $\widetilde{\GL}^+( 2 , \BR )$
consisting of elements $( T , f )$ where $T$ is the composition of a
rotation and a scaling by a positive real number.  Note that $f( x ) =
x + a$ where $a$ is a real number determined modulo $2\BZ$ by $T$.
Since $G$ is a subgroup of $\widetilde{\GL}^+( 2 , \BR )$, it acts
continuously on $\Costab( \sU )$ by Section \ref{sec:group_actions}.
The action is given by
\[
  ( z_0 , \varphi_0 ) \cdot ( T , f ) = ( T^{-1}z_0 , \varphi_0 - a ).
\]
It is easy to see that the action is free and transitive, so $\Costab(
\sU )$ is homeomorphic to $G$.

However, $G$ is simply connected and $2\BZ$ can be viewed as the
discrete subgroup consisting of the elements $( \id , x \mapsto x + y
)$ for $y \in 2\BZ$.  Hence $G$ is the universal covering group of $G
/ 2\BZ$,
%see Rotman Theorem 10.2
but $G / 2\BZ$ can be identified with the subgroup of $\GL^+( 2 , \BR
)$ consisting of transformations $T$ which are the composition of a
rotation and a scaling by a positive real number.  Hence $G / 2\BZ$ is
homeomorphic to $\BC \setminus 0$, so $G$ is homeomorphic to the
universal cover which is $\BC$.
\end{proof}

\section{Example: The compact derived category of $kA_2$.  Why
Condition (S) is necessary}
\label{sec:example2}

This section shows that without condition (S) of Definition
\ref{def:40}, the conclusion of our deformation result Proposition
\ref{pro:deformation} fails.

Let $k$ be an algebraically closed field.  The Auslander-Reiten quiver
of the compact derived category $\sV = \Dc( kA_2 )$ is $\BZ A_2$.  Let
$x$ and $y$ be consecutive indecomposable objects on the quiver; then
$\K_0( \sV )$ is free on the generators $[x]$ and $[y]$.
\[
  \xymatrix @-2.7pc @! {
    && *{\bullet} \ar[ddrr] &&&& *{\bullet} \ar[ddrr] &&&& *{\bullet} \ar[ddrr] &&&& *{y} \ar[ddrr] &&&& *{\ast} \ar[ddrr] &&&& *{\ast} \ar[ddrr] &&&& *{\ast} && \\
  \cdots &&&&&&&&&&&&&&&&&&&&&&&&&&&& \cdots \\
    &&&& *{\bullet} \ar[uurr] &&&& *{\bullet} \ar[uurr] &&&& *{x} \ar[uurr] &&&& *{\circ} \ar[uurr] &&&& *{\ast} \ar[uurr] &&&& *{\ast} \ar[uurr] &&&& \\
                       }
\]

Let $\sA$ denote $\add$ of the indecomposable objects forming the left
hand part of the quiver ending at $y$; some of them are marked with
bullets in the sketch.  Let $\sB$ denote $\add$ of the indecomposable
objects forming the right hand part of the quiver, starting with the
asterisks in the sketch.  It is not hard to check that $( \sA , \sB )$
is a bounded co-t-structure in $\sV$.  The co-heart is $\sC = \sA \cap
\Sigma^{-1}\sB = \add ( x , y )$.

Define a co-stability function $Z : \K_0^{\split}( \sC ) \rightarrow
\BC$ by $Z( x ) = Z( y ) = \exp( i\pi\frac{1}{2} )$; it clearly has
the split Harder-Narasimhan property.

By the proof of Proposition \ref{pro:17}, the data $( \sA , \sB )$ and
$Z$ correspond to the following co-stability condition $( Z , \cQ )$:
By means of Proposition \ref{pro:K}, the above $Z$ is viewed as a
group homomorphism $Z : \K_0( \sV ) \rightarrow \BC$; it still
satisfies $Z( x ) = Z( y ) = \exp( i\pi\frac{1}{2} )$.  The co-slicing
$\cQ$ is given by $\cQ( \frac{1}{2} ) = \add( x , y )$ and $\cQ(
\varphi ) = 0$ for $\varphi \in \; ] 0 , 1 ] \setminus \frac{1}{2}$.

Let $\varepsilon < \frac{1}{2}$ be given and let $W \in \K_0( \sV )^*$
be the deformation of $Z$ defined by $W( x ) = \exp( i\pi\frac{1}{2}
)$ and $W( y ) = \cos( \pi\varepsilon) \exp \big( i\pi( \frac{1}{2}
+ \varepsilon ) \big)$.  This $W$ is chosen to satisfy two criteria:
(i) Compared to $Z$, it fixes $x$ but moves $y$ from phase
$\frac{1}{2}$ to phase $\frac{1}{2} + \varepsilon$; (ii) It satisfies
the inequality in Proposition \ref{pro:deformation} because of the
factor $\cos( \pi\varepsilon )$.

\begin{Proposition}
\begin{enumerate}

  \item  Condition (S) fails for $( Z , \cQ )$.

\smallskip

  \item  The conclusion of Proposition \ref{pro:deformation} fails for
  the deformation $W$.  That is, there is no $( W , \cR ) \in \Costab(
  \sV )$ such that $d( \cQ , \cR ) < \varepsilon$.

\end{enumerate}
\end{Proposition}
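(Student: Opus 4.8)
The statement consists of two logically independent claims: (i) that condition~(S) fails for $(Z,\cQ)$, and (ii) that the deformation $W$ admits no $\cR$ with $d(\cQ,\cR)<\varepsilon$. For (i) the argument is immediate. The subcategory $\cQ(\tfrac{1}{2})=\add(x,y)$ has $\ind\cQ(\tfrac{1}{2})=\{x,y\}$ up to isomorphism, with $x\not\cong y$, and $x$, $y$ are consecutive on the Auslander--Reiten quiver $\BZ A_2$ of $\sV$, hence joined by an arrow. So there is an irreducible, in particular non-zero, morphism between them, i.e.\ at least one of $\sT(x,y)$ and $\sT(y,x)$ is non-zero. This violates condition~(S) of Definition~\ref{def:40} for $\cQ$ at $\varphi=\tfrac{1}{2}$.

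For (ii) the plan is a proof by contradiction modelled on the proof of Proposition~\ref{pro:27}. Assume $(W,\cR)\in\Costab(\sV)$ with $d(\cQ,\cR)<\varepsilon$. Since $\varepsilon<\tfrac{1}{2}$, Definition~\ref{def:21} yields, exactly as in Proposition~\ref{pro:27}, an $\varepsilon'<\varepsilon$ with $\cQ(\varphi)\subseteq\cR\big(\,]\varphi-\varepsilon',\varphi+\varepsilon']\,\big)$ for every $\varphi$; and because $\varepsilon'<\tfrac{1}{2}$, Remark~\ref{rmk:26} rewrites the right-hand side as $\add\big(\bigcup_{\psi\in\,]\varphi-\varepsilon',\varphi+\varepsilon']}\cR(\psi)\big)$. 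By Krull--Schmidt, an indecomposable object of $\cQ(\varphi)$ therefore lies in a single slice $\cR(\psi)$ with $\psi\in\,]\varphi-\varepsilon',\varphi+\varepsilon']$. Applying this with $\varphi=\tfrac{1}{2}$ to the indecomposable object $y\in\cQ(\tfrac{1}{2})$ gives $y\in\ind\cR(\psi_y)$ for some $\psi_y\in\,]\tfrac{1}{2}-\varepsilon',\tfrac{1}{2}+\varepsilon']$.

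The contradiction is then read off from the co-stability condition $(W,\cR)$, which forces $W(y)=m'(y)\exp(i\pi\psi_y)$ with $m'(y)>0$. But by construction $W(y)=\cos(\pi\varepsilon)\exp\big(i\pi(\tfrac{1}{2}+\varepsilon)\big)$ with $\cos(\pi\varepsilon)>0$, since $0<\varepsilon<\tfrac{1}{2}$; comparing arguments gives $\psi_y\equiv\tfrac{1}{2}+\varepsilon\pmod 2$. This is impossible: $\psi_y\leq\tfrac{1}{2}+\varepsilon'<\tfrac{1}{2}+\varepsilon$, and the interval $]\tfrac{1}{2}-\varepsilon',\tfrac{1}{2}+\varepsilon']\subseteq\,]0,1[$ contains no real number congruent to $\tfrac{1}{2}+\varepsilon$ modulo $2$ — the only candidate in $]0,1[$ is $\tfrac{1}{2}+\varepsilon$ itself, which the interval excludes. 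Hence no such $(W,\cR)$ exists, which is exactly claim (ii).

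There is no serious obstacle. The only step requiring care is the reduction in the second paragraph, "$y\in\cR\big(\,]\tfrac{1}{2}-\varepsilon',\tfrac{1}{2}+\varepsilon']\,\big)\Rightarrow y\in\cR(\psi_y)$ for a single $\psi_y$", which rests on the triviality of extensions across a band of width $<1$ (Remark~\ref{rmk:26}) together with Krull--Schmidt, just as in Proposition~\ref{pro:27}; everything else is bookkeeping with phases. It is worth noting that $x$ plays no role in (ii): the obstruction is carried by $y$ alone, whose phase the deformation $W$ moves to $\tfrac{1}{2}+\varepsilon$, strictly outside every window $]\tfrac{1}{2}-\varepsilon',\tfrac{1}{2}+\varepsilon']$ with $\varepsilon'<\varepsilon$ — and this splitting of the slice $\add(x,y)$ into pieces of distinct phase is precisely what the failure of condition~(S) at $\varphi=\tfrac{1}{2}$ permits.
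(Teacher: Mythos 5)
Your proof is correct, and for part (ii) it takes a genuinely different route from the paper's. Part (i) is the same one-line observation (an arrow between the consecutive indecomposables $x,y\in\ind\cQ(\tfrac12)$). For part (ii), the paper argues globally: from $d(\cQ,\cR)<\delta<\tfrac12$ it places $\add(x,y)$ inside $\cR\big(\,]0,1]\,\big)$, identifies this subcategory as the co-heart of a bounded co-t-structure (Remark \ref{rmk:26b}), uses the $\K$-theoretic isomorphism of Proposition \ref{pro:K} to conclude that its indecomposables are exactly $x$ and $y$, and then reads off from the phases of $W(x)$ and $W(y)$ that $x\in\cR(\tfrac12)$ and $y\in\cR(\tfrac12+\varepsilon)$, contradicting $\sV(x,y)\neq 0$ via Definition \ref{def:4}(ii). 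You instead argue locally and use only $y$: the metric bound $d(\cQ,\cR)<\varepsilon$ confines $y$ to a single slice $\cR(\psi_y)$ with $\psi_y$ in a window of radius $\varepsilon'<\varepsilon$ around $\tfrac12$ (the passage to a single slice via Remark \ref{rmk:26} and Krull--Schmidt is the same device as in Proposition \ref{pro:27}, including the harmless choice of $\varepsilon'$ strictly between), while the co-stability condition forces $\psi_y\equiv\tfrac12+\varepsilon\pmod 2$, which is incompatible with that window. Your argument is more elementary -- no co-heart counting, no use of $\sV(x,y)\neq0$ in part (ii) -- and it proves exactly the stated claim. What the paper's heavier argument buys is a strictly stronger conclusion, stated explicitly in its proof: there is no $(W,\cR)\in\Costab(\sV)$ with $d(\cQ,\cR)<\tfrac12$ at all. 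Your phase-window argument cannot reach that strengthening, since for $\delta>\varepsilon$ the phase $\tfrac12+\varepsilon$ lies inside the window and the contradiction must then come from the interplay of both $x$ and $y$ with Definition \ref{def:4}(ii), as in the paper.
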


\begin{proof}
(i)  This is clear because $x, y \in \cQ( \frac{1}{2} )$ while $\sV( x
, y ) \neq 0$.

(ii)  We show more than formulated, namely, there is no $( W , \cR )
\in \Costab( \sV )$ such that $d( \cQ , \cR ) < \frac{1}{2}$.  For
suppose that there is.  Then we have $d( \cQ , \cR ) < \delta <
\frac{1}{2}$ for some $\delta$ and this gives the first of the
inclusions in the following formula.
\[
  \add( x , y )
  = \cQ( { \textstyle \frac{1}{2} } )
  \subseteq \cR \big(\, [ { \textstyle \frac{1}{2} } - \delta , { \textstyle \frac{1}{2} } + \delta ] \,\big)
  \subseteq \cR \big(\, ] 0 , 1 ] \,\big)
  = \add \Big( \bigcup_{\psi \in ] 0 , 1 ]} \cR( \psi ) \Big).
\]
The last equality is by Remark \ref{rmk:26}.

By Remark \ref{rmk:26b}, the right hand side of this formula is the
co-heart of a bounded co-t-structure in $\sV$, so it follows from
Proposition \ref{pro:K} that the right hand side has precisely two
isomorphism classes of indecomposable objects which must necessarily
be the isomorphism classes of $x$ and $y$.

However, since $( W , \cR )$ is a co-stability condition, we have $W(
r ) = m'( r ) \exp( i\pi\psi )$ for $r \in \cR( \psi ) \setminus 0$.
The values $W( x )$ and $W( y )$ hence force $x \in \cR( \frac{1}{2}
)$ and $y \in \cR( \frac{1}{2} + \varepsilon )$.  But this contradicts
$\sV( x , y ) \neq 0$ by Definition \ref{def:4}(ii).
\end{proof}

\medskip
\noindent
{\bf Acknowledgement.}
Part of this work was carried out while J\o rgensen was visiting
Hannover supported by the research priority programme SPP 1388 {\em
Darstellungstheorie} of the Deutsche Forschungsgemeinschaft (DFG).
He gratefully acknowledges financial support through the grant HO
1880/4-1 held by Thorsten Holm.

\end{document}